\title{Nekhoroshev theory and discrete averaging}
\author{V.~Gelfreich$^1$ and A.~Vieiro$^{2,3}$\\[4pt]
$^1$ \small Mathematics Institute, University of Warwick, \small Coventry CV4 7AL, UK\\
\small{\tt v.gelfreich@warwick.ac.uk}\\[4pt]
$^2$ \small Departament de Matem\`atiques i Inform\`atica,\\ \small Universitat de Barcelona, Gran Via 585, 08007 Barcelona, Spain\\
$^3$ \small Centre de Recerca Matem\`atica (CRM),\\ \small Campus Bellaterra, 08193 Bellaterra, Spain\\
\small{\tt vieiro@maia.ub.es}
}
\providecommand{\keywords}[1]
{
  \small	
  \textbf{\textit{Keywords---}} #1
}
\newcommand{\val}{\mathrm{val}}
\newcommand{\N}{\mathbb{N}}
\newcommand{\R}{\mathbb{R}}
\newcommand{\C}{\mathbb{C}}
\newcommand{\T}{\mathbb{T}}
\newcommand{\Z}{\mathbb{Z}}
\newcommand{\Q}{\mathbb{Q}}
\newcommand{\J}{\mathrm J}
\newcommand{\I}{\mathrm I}
\newcommand{\Tf}{\mathrm T}
\newtheorem{thm}{Theorem}[section]
\newtheorem{lemma}[thm]{Lemma}
\newtheorem{remark}[thm]{Remark}%[section]
\begin{document}

\maketitle	
\begin{abstract}
This paper contains a proof of the Nekhoroshev theorem for quasi-integrable
symplectic maps. 
In contrast to the classical methods, our proof 
is based on the discrete averaging method and does not rely on transformations 
to normal forms. 
At the
centre of our arguments lies the theorem on 
embedding of a near-the-identity symplectic map into
an autonomous Hamiltonian flow with exponentially small error.
\end{abstract}
\hspace{1pt}
\keywords{Symplectic maps, Hamiltonian systems, Nekhoroshev theorem, stability, near-the-identity maps, discrete averaging}

%{\tableofcontents	}

\section{Introduction}

If a quasi-integrable Hamiltonian system satisfies 
suitable non-degeneracy assumptions,  the
KAM theorem states that invariant tori occupy the major part of the phase space and the Lebesgue measure of the complement to the set of the tori converges to zero when a perturbation parameter vanishes \cite{Arn63,BiasChi18,Nei81,Pos82}. This complement is dense in the phase space.
Moreover, if the number of degrees of
freedom is three or larger,  the complement  is a connected set.
Therefore, in contrast to the case of two degrees of freedom, the KAM
theory does not prevent  existence of trajectories which may travel large distances inside an energy level. This phenomenon is known under the name of Arnold diffusion \cite{Arnold64}. The maximal speed of Arnold diffusion is bounded
by Nekhoroshev estimates \cite{Nekhoroshev77}, 
which state that action variables oscillate near their initial values for exponentially long times.

Hamiltonian perturbation theory studies both  systems of Hamiltonian
equations and symplectic maps.
A $2d$-dimensional symplectic quasi-integrable map can be
seen as an isoenergetic Poincar\'e section of a Hamiltonian
flow with $d+1$ degrees of freedom \cite{KuksinP94},
and a Nekhoroshev theorem for maps is usually derived from a Nekhoroshev theorem for flows, for 
example, from the results of \cite{LochakN92},
although  direct proofs for maps are also available (see e.g.  \cite{Guzzo04}).

In the literature one can find two different strategies for proving
a Nekhoroshev type estimate. The first strategy,  originally proposed by Nekhoroshev 
\cite{Nekhoroshev77,Nekhoroshev79}, relies on a careful study of normal forms
for resonances of all possible multiplicities. An alternative strategy was proposed by Lochak   \cite{Lochak92} who showed that the analysis can be restricted to
resonances of the highest multiplicity only.
In the convex case
both strategies lead to optimal stability coefficients \cite{LochakN92,Poschel93}. 

\goodbreak

In this paper we provide a new proof of the Nekhoroshev theorem for
quasi-integrable symplectic analytic maps  under the convexity assumption,
without reducing the problem to a Hamiltonian flow.
A part of our proof follows the Lochak-Neishtadt approach \cite{LochakN92} but, in
contrast to the traditional approach,  our method does not rely on
transformations to normal forms. Instead we propose a direct reconstruction of
slow observables from  iterates  of the
quasi-integrable map in original coordinates with the help of a discrete averaging method.
Our construction can be applied to an individual map and  provides
explicit values for constants in the estimates. 
Therefore, our method can be used
for numerical analysis of Arnold diffusion and 
for developing new tools for visualisation of the dynamics \cite{GelfreichV18}.

Our proof of the Nekhoroshev theorem includes a refined version of  classical
Neishtadt's theorem~\cite{Neishtadt84} which may be of independent interest.
Neishtadt proved that if a member of a smooth 
near-the-identity  family of analytic symplectic maps is sufficiently close to the identity, then it can be
approximated by an autonomous Hamiltonian flow with an exponentially small
error.  Neishtadt's proof relies on representing the maps as time-one maps of
time-periodic flows and classical averaging. Alternatively the theorem can be proved
using Moser's analysis of the formal interpolating flow
\cite{BenettinG85,Moser68}. By contrast, our construction is applicable to
individual maps. We will  show that the approximation error can be controlled by
the ratio of two natural parameters: one characterises the size of a complex
neighbourhood where the map is close to the identity and the second one is the
distance from the map to the identity in a suitably chosen norm.
Our construction  is based
on the discrete averaging and has an additional important advantage: the
construction is explicit in terms of iterates of the map and can be easily
implemented numerically~\cite{GelfreichV18}.

The paper is organized in the following way. Section~\ref{Se:NekhoroshevThm}
presents
the statement of the Nekhoroshev theorem and explains its derivation
from the analysis of stability near fully resonant tori.
Section~\ref{Se:strategy} contains a proof of some elementary bounds
and explains the strategy of our proof.
 Section~\ref{Se:interpolatingVF} contains the necessary details of 
 the theory of  interpolating vector fields. 
 In Section~\ref{Se:NeishtadtThm} we prove
our refined version of Neishtadt's theorem, the  embedding theorem of a near-the-identity map into an autonomous Hamiltonian flow
up to an exponentially small error. The exponential bounds for stability time
are proved  in Section~\ref{Sect:Nekhoroshev}. 
In this way
Sections~\ref{Se:interpolatingVF}, \ref{Se:NeishtadtThm} and
\ref{Sect:Nekhoroshev} represent a self-contained proof of the Nekhoroshev theorem
stated in Section~\ref{Se:NekhoroshevThm}. 
Finally, in Section~\ref{Se:Nucleus} we
consider a small region around a resonance, that we called a nucleus of 
resonance, and discuss some improved bounds for stability times in these
regions of the phase space. In particular we discuss how the stability times scale
when $\varepsilon$ approaches zero.
Our final comments and conclusions are given in Section~\ref{Se:conclusion}.

\section{Nekhoroshev theorem for a near-integrable map
\label{Se:NekhoroshevThm}}

The main object of this paper is a one-parameter family
$F_\varepsilon:(I,\varphi)\mapsto (\bar I,\bar \varphi)$ of 
real-analytic exact symplectic maps 
of the form 
\begin{equation}\label{Eq:feps}
\left\{
\begin{aligned}
	\bar I&=I+\varepsilon a(I,\varphi),\\
	\bar \varphi&=\varphi+\omega(I)+\varepsilon b(I,\varphi) \pmod1,
\end{aligned}	
\right.
\end{equation}
where the functions $a,b$ are periodic  in $\varphi\in\T^d=\R^d/\Z^d$,
$I$ is a vector in $\R^d$ 
and $\varepsilon\ge 0$ is a perturbative parameter.
At $\varepsilon=0$ the map is integrable and $F_0:(I,\varphi)\mapsto (\bar I,\bar \varphi)$ is simply given by
\begin{equation}\label{Eq:f0}
\left\{
\begin{aligned}
	\bar I&=I,\\
	\bar \varphi&=\varphi+\omega(I) \pmod1.
\end{aligned}	
\right.
\end{equation}
Obviously the variable $I$ remains constant along trajectories of $F_0$ and
the equation $I=I_0$ defines an invariant torus with the frequency vector
$\omega(I_0)$.
Trajectories of $F_\varepsilon$  with very close initial conditions are capable of
separating from each other exponentially fast with  stability times 
 $T_L \sim \varepsilon^{-1/2}$, 
 a natural Lyapunov time for the system.  
 The Nekhoroshev estimates address substantially longer timescales.
Given an initial condition $(I_0,\varphi_0)$  let $(I_k,\varphi_k) = F_\varepsilon^k(I_0,\varphi_0)$, $k\in\mathbb Z$, be the corresponding trajectory.
A Nekhoroshev estimate states that, for $|\varepsilon| \leq \varepsilon_0$,
$$
|I_k - I_0| \leq R(\varepsilon) \quad\text{ for  $|k| \leq T_\varepsilon$} ,
$$
where the radius of confinement $R(\varepsilon)=\mathcal{O}(\varepsilon^\beta)$ and the
stability time $T_\varepsilon\sim \exp(c/\varepsilon^\alpha)$, 
for suitable stability exponents $0 < \alpha,\beta
\leq 1$. 

It is well known that the long time behaviour of trajectories
is sensitive to the smoothness class of the map $F_\varepsilon$ \cite{BamLan21,Boune10,BounemouraN12,MarcoSauz02}. In this paper
we assume that $F_\varepsilon$ is real-analytic although some of
our arguments are applicable to other smoothness classes.
Without loosing in generality we assume that $F_\varepsilon$
is real-analytic in $B_R \times \T^d$, where
$B_R\subset \R^d$ is a ball of radius $R>0$
centered at  some  point of $\R^d$.
We assume that for all $\varepsilon$  
with $0\le \varepsilon\le \varepsilon_0$, 
the lift of the map 
have an analytic continuation onto the complex 
neighbourhood $\mathcal{D}_{F}$ of $B_R \times \R^d$ given by
\begin{equation}\label{Eq:D_F}
    \mathcal{D}_{F}=\left\{\;(I,\varphi) \in \mathbb{C}^{2d}\;:\;\text{dist}(I,B_R) \le \sigma,\; |\operatorname{Im}(\varphi)|\le r\;\right\}
\end{equation}
for some $\sigma,r>0$ independent of $\varepsilon$. 
We hide the dependence of $a$ and $b$ on $\varepsilon$
to simplify notations. Our proof is not sensitive to the nature of this dependence and it is sufficient to assume that supremum norms of $a$ and $b$ are bounded uniformly in $\varepsilon$.

It is also well known that the long time stability of action variables $I$ depends 
on the properties of the unperturbed frequency map $\omega$ \cite{BenettinG85,GuzChiBen16, Nekhoroshev77,Nekhoroshev79}. Since the map is symplectic, $\omega$ is a gradient of a scalar function.
We assume that $\omega=h_0'$ where the function
$h_0$ is strongly convex on the intersection of $\mathcal D_F$ with the real subspace, i.e. there is $\nu >0$ such that for every real $I$ in the domain of $h_0$
one has 
\begin{equation} \label{convexity}
h_0''(I)(v,v) = 
(\omega'(I)v ) \cdot v 
\geq \nu \, 
v \cdot v 
\end{equation}
for all $v\in\R^d$.
The convexity assumption means that $\nu$ is a lower bound for the spectrum of the Hessian matrices $h_0''(I)$ for real values of $I$.
Note that a function $h_0$ is strongly convex on a convex set
 iff for any $I,J$
\begin{equation}\label{Eq:strongconv}
(h_0'(I)-h_0'(J)) \cdot (I-J)
\ge \nu \, 
(I-J)\cdot (I-J)
\end{equation}
with the same convexity constant $\nu$.

Under these assumptions we will prove the Nekhoroshev
estimates with optimal exponents $\alpha=\beta=\frac1{2(d+1)}$.

\begin{thm}[Nekhoroshev theorem]   \label{Thm:Nekhoroshevestimates}
If 
a real-analytic exact symplectic map $F_\varepsilon$ satisfies the  assumptions stated above  and
$h_0$ is strongly convex on a
real neighbourhood of $B_R$, then
there are positive constants 
$c_1,c_2,c_3$
such that for every initial condition
 $(I_0,\varphi_0)\in B_R\times \mathbb T^d$
 one has
\[
|I_k-I_0|< c_1 \varepsilon^{1/2(d+1)}
\quad \text{ for } \quad
0\le k\le T_\varepsilon=c_2 \exp\left(c_3\varepsilon^{-1/2(d+1)}\right).
\]
\end{thm}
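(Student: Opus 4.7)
My plan is to follow Lochak's strategy: for each initial condition, produce a nearby fully resonant torus of the unperturbed map via simultaneous Dirichlet approximation, apply the embedding theorem of Section~\ref{Se:NeishtadtThm} to the $q$-th iterate $F_\varepsilon^q$ around that torus, and use strong convexity of $h_0$ to trap the action variable.

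Fix $(I_0, \varphi_0) \in B_R \times \T^d$. For any cutoff $Q \geq 1$, simultaneous Dirichlet approximation provides $q \in \{1,\dots,Q\}$ and $p \in \Z^d$ with $|q\omega(I_0) - p| \leq Q^{-1/d}$. Strong convexity of $h_0$ makes $\omega$ a local diffeomorphism, so there exists a resonant $I_*$ with $q\omega(I_*) = p$ and $|I_0 - I_*| \leq \nu^{-1} q^{-1} Q^{-1/d}$. On a complex neighbourhood of $I_*$ where $F_\varepsilon^q$ is close to the identity (because $F_0^q$ acts trivially on angles at $I_*$), the embedding theorem of Section~\ref{Se:NeishtadtThm} produces a real-analytic Hamiltonian $H$ such that $F_\varepsilon^q = \Phi^1_H + \mathcal E$ with $\|\mathcal E\|$ exponentially small. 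The principal part of $H$ is $K_0(I) := qh_0(I) - p \cdot I$ plus an $O(q\varepsilon)$ correction; Taylor-expanding at $I_*$ using $q\omega(I_*) = p$, with $J := I - I_*$,
\[
H(I,\varphi) - H(I_*, \varphi) = \tfrac{q}{2}\, h_0''(I_*)(J,J) + O(q|J|^3) + O(q\varepsilon),
\]
and $h_0''(I_*)(J,J) \geq \nu|J|^2$ by strong convexity.

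Quasi-conservation along iterates of $F_\varepsilon^q$ gives $|H(I_{nq}, \varphi_{nq}) - H(I_0, \varphi_0)| \leq n\|\mathcal E\|$, and combined with the convex estimate,
\[
\tfrac{q\nu}{2}|J_{nq}|^2 \leq \tfrac{q\nu}{2}|J_0|^2 + Cq\varepsilon + n\|\mathcal E\|.
\]
The Lochak balance is $Q = \varepsilon^{-d/(2(d+1))}$, which makes $Q^{-1/d} = \varepsilon^{1/(2(d+1))}$ and yields $\|\mathcal E\| \leq \exp(-c_3\varepsilon^{-1/(2(d+1))})$. Then $|J_0| \leq \nu^{-1}\varepsilon^{1/(2(d+1))}$, and a bootstrap shows $|J_{nq}| \lesssim \varepsilon^{1/(2(d+1))}$ for all $n \leq c_2'\exp(c_3\varepsilon^{-1/(2(d+1))})$, confirming the orbit stays in the neighbourhood. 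Translating to $F_\varepsilon$-iterates $k = nq$, this gives $|I_k - I_0| \leq |J_k| + |J_0| \leq c_1 \varepsilon^{1/(2(d+1))}$ for $k \leq c_2\exp(c_3\varepsilon^{-1/(2(d+1))})$. Intermediate iterates $k = nq + j$ with $0 \leq j < q$ are controlled by $|F_\varepsilon^j - F_0^j| = O(j\varepsilon) = O(q\varepsilon)$, absorbed into $c_1\varepsilon^{1/(2(d+1))}$.

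The hard part is the coupled optimisation: the three parameters---Dirichlet accuracy $Q^{-1/d}$, rotational deviation $q|\omega'|\cdot|I-I_*|$ on the confinement ball, and perturbation size $q\varepsilon$---must all be matched to the common scale $\varepsilon^{1/(2(d+1))}$ via the choice of $Q$. The delicate point is applying the embedding theorem on a complex domain that is simultaneously wide enough to contain the confinement ball of radius $\sim\varepsilon^{1/(2(d+1))}$ and gives the sharp exponent $\varepsilon^{-1/(2(d+1))}$ in the exponentially small error. A secondary difficulty is uniformity in $I_0$: the constants $c_1, c_2, c_3$ must depend only on $\nu, d, R, \sigma, r, \|a\|_\infty, \|b\|_\infty$ and not on the specific resonance $(p,q)$ produced by Dirichlet.
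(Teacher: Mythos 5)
Your proposal follows essentially the same route as the paper: find a nearby fully resonant torus $I_*$ of period $q$ via Dirichlet approximation, pull back $F_\varepsilon^q$ to a near-identity map on a small complex neighbourhood of $I=I_*$, apply the embedding theorem of Section~\ref{Se:NeishtadtThm} to obtain a Hamiltonian $H$ whose leading $\varphi$-independent part is convex quadratic in $I-I_*$, and use quasi-conservation of $H$ under iterates of $F_\varepsilon^q$ to trap the action; the Lochak balance $Q=\varepsilon^{-d/2(d+1)}$ is identical. The paper packages the covering step as Lemma~\ref{Le:covering} and the trapping step as Theorem~\ref{Thm:longstab}, but the content is the same, and you correctly flag the two genuinely delicate points — the joint scaling of the Dirichlet accuracy, the rotational spread $q\|\omega'\|\,|I-I_*|$, and the perturbation $q\varepsilon$ to a common scale, and the uniformity of constants over all admissible $(p,q)$.
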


We derive the Nekhoroshev theorem from a statement which provides more detailed
information about the stability of actions.

\begin{thm}[long term stability of actions]\label{Thm:longstab}
Under the assumptions of the Nekhoroshev
theorem, 
there are constants $\gamma_0>0$ and $r_0\in(0,1)$
with the following property. 
For every $\gamma\ge \gamma_0$ there are positive constants $\varepsilon_0$,
 $c_2,c_3$ such that if 
$0<\varepsilon \leq \varepsilon_0$, 
$n< \varepsilon^{-d/2(d+1)}$,
and  $I_*\in B_R$ corresponds to a fully resonant
unperturbed torus with $n\omega (I_*)\in\Z^d $,  
then any trajectory with initial conditions satisfying 
$|I_0-I_*|<r_0\rho_n$ and $\varphi_0 \in \T^d$
satisfies the inequality
 \[
|I_k-I_*|< \rho_n 
\qquad \text{for} \quad 
0\le k \le T_\varepsilon=c_2 \exp\left(c_3\varepsilon^{-1/2(d+1)}\right),
\]
where 
\( \rho_n=\gamma n^{-1} \varepsilon^{1/2(d+1)} \).
\end{thm}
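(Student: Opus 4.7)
The plan is to follow the Lochak--Neishtadt strategy and localise the analysis to a neighbourhood of the fully resonant torus at $I_*$, using the refined Neishtadt embedding theorem of Section~\ref{Se:NeishtadtThm} in place of a normal form. Since $n\omega(I_*)\in\Z^d$, on a ball $|I-I_*|\le\rho_n$ the unperturbed iterate $F_0^n(I,\varphi)=(I,\varphi+n\omega(I)\bmod 1)$ sits at distance $n|\omega(I)-\omega(I_*)|=\mathcal{O}(n\rho_n)=\mathcal{O}(\varepsilon^{1/2(d+1)})$ from the identity; including the $\mathcal{O}(n\varepsilon)$ contribution of the perturbation and using $n\varepsilon=\mathcal{O}(\varepsilon^{(d+2)/2(d+1)})$ from $n<\varepsilon^{-d/2(d+1)}$, the full map $F_\varepsilon^n$ lies at distance $\delta=\mathcal{O}(\varepsilon^{1/2(d+1)})$ from the identity on a complex neighbourhood centred at $I_*$.

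I would then apply the Neishtadt theorem to $F_\varepsilon^n$, yielding an autonomous real-analytic Hamiltonian $H_n$ with
\[
\bigl\|F_\varepsilon^n-\Phi^1_{H_n}\bigr\|\le C_1\,\exp\bigl(-c_*\,\varepsilon^{-1/2(d+1)}\bigr).
\]
Since $n\omega(I)=n(\omega(I)-\omega(I_*))+n\omega(I_*)$ and the second summand is an integer vector, the leading part of the interpolating Hamiltonian is
\[
H_n(I,\varphi)=n\bigl(h_0(I)-\omega(I_*)\cdot I\bigr)+n\varepsilon\,h_1(I,\varphi,\varepsilon),
\]
with $h_1$ uniformly bounded. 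The rescaled quantity $G:=H_n/n$ is exactly conserved by $\Phi^1_{H_n}$, and because $I_*$ is a critical point of its leading $\varphi$-independent part, strong convexity of $h_0$ yields, on the real ball of radius $\rho_n$ around $I_*$,
\[
G(I,\varphi)-G(I_*,\varphi)\ge\tfrac{\nu}{4}|I-I_*|^2-C_2\varepsilon^2.
\]

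Telescoping the Neishtadt error along the subsampled trajectory gives $|G(I_{jn},\varphi_{jn})-G(I_0,\varphi_0)|\le j\,\|DG\|_\infty\,C_1 e^{-c_*\varepsilon^{-1/2(d+1)}}$, while a Taylor bound at the initial point produces $G(I_0,\varphi_0)-G(I_*,\varphi_0)\le\tfrac{M}{2}r_0^2\rho_n^2+C_2\varepsilon^2$ with $M=\sup\|h_0''\|$. Combining these three inequalities gives
\[
\tfrac{\nu}{4}|I_{jn}-I_*|^2\le\tfrac{M}{2}r_0^2\rho_n^2+C_3\varepsilon^2+j\,C_4\,e^{-c_*\varepsilon^{-1/2(d+1)}}.
\]
Using $n\le\varepsilon^{-d/2(d+1)}$ one verifies $\varepsilon^2/\rho_n^2=\mathcal{O}(\gamma^{-2})$ and $n\varepsilon/\rho_n=\mathcal{O}(\varepsilon^{1/2(d+1)}/\gamma)$; fixing $r_0$ small and $\gamma_0$ large so the first two terms on the right are strictly less than $\tfrac{\nu}{4}\rho_n^2$, and choosing $c_3<c_*$, the exponential term remains dominated for $jn\le T_\varepsilon=c_2\exp(c_3\varepsilon^{-1/2(d+1)})$. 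The intermediate iterates $k=jn+\ell$ with $0<\ell<n$ are handled by the elementary bound $|I_k-I_{jn}|\le\|a\|_\infty\,n\varepsilon\ll\rho_n$.

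A standard bootstrap then closes the argument: setting $k^\star=\sup\{k:|I_j-I_*|<\rho_n\text{ for all }j\le k\}$, the estimates above are valid on $[0,k^\star]$ and force $k^\star\ge T_\varepsilon$. The main technical obstacle is to make all constants uniform in $n$ and $I_*$ subject to $n<\varepsilon^{-d/2(d+1)}$: this requires applying the Neishtadt embedding theorem on complex neighbourhoods of width $\sim\rho_n$ in the action direction and of fixed width in the angle direction, with the closeness-to-identity parameter certified to be $\mathcal{O}(\varepsilon^{1/2(d+1)})$ independently of the particular resonance. Once this uniformity is in place, the exponent $\alpha=\beta=1/2(d+1)$ emerges from the balance between the quadratic convexity gap $\nu\rho_n^2$ and the $\mathcal{O}(n\varepsilon)$ size of the perturbation term in the interpolating Hamiltonian.
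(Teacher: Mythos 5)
Your proposal follows essentially the same route as the paper's proof in Section~\ref{Sect:Nekhoroshev}: localize near the fully resonant torus, apply the discrete-averaging Neishtadt embedding (Theorem~\ref{Thm:alaNeishtadt}) to $f_\varepsilon^n$, show the resulting Hamiltonian $H_m$ is strongly convex in the action up to a small $\varphi$-dependent remainder, confine trajectories in a sub-level set, telescope the exponentially small energy drift, and fill in intermediate iterates with the a priori bounds. Two small remarks: the $\varphi$-dependent remainder in your $G$ is of size $\mathcal{O}(\varepsilon)$, not $\mathcal{O}(\varepsilon^2)$ (the paper's bound on $w_n$ gives $\|w_n\|/(n\rho_n)\lesssim n\varepsilon/\rho_n + \varepsilon/\rho_n^2$), so the quantity you need to control is $\varepsilon/\rho_n^2$ rather than $\varepsilon^2/\rho_n^2$ --- fortunately both are $\mathcal{O}(\gamma^{-2})$, so your conclusion is unaffected; and the ``uniformity in $n$'' you identify as the main technical obstacle is resolved in the paper by the explicit linear rescaling $I = I_* + \rho_n J$, which turns the anisotropic neighbourhood into one where Theorem~\ref{Thm:alaNeishtadt} applies with fixed $\delta$ and $\epsilon_n\sim\varepsilon^{1/2(d+1)}$ uniformly in $n$ and $I_*$.
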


Note that we will provide explicit expressions for $\gamma_0$ and $r_0$.

In order to derive Theorem~\ref{Thm:Nekhoroshevestimates}
from Theorem~\ref{Thm:longstab} we use the ideas of
Lochak covering to show that the balls
$|I_0-I_*|<r_0\rho_n$ cover all initial conditions.
Our arguments use the Dirichlet Theorem on simultaneous
approximations in a way similar to the papers \cite{Lochak92,LochakN92,LochNN94}.

\begin{thm}[Dirichlet \cite{Cassels57}] \label{Thm:Dirichlet}
For any $\omega\in\R^d$ and any $N>1$ there are $\omega_*\in\Q^d$ and
$n\in\N$ such that $ n < N$, $n\omega_*\in\Z^d$  and
\(
|\omega-\omega_*|<\frac1{nN^{1/d}}.
\)
\end{thm}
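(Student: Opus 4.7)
The plan is to deduce the theorem from the classical Dirichlet box (pigeonhole) principle applied to the fractional parts of the iterates $k\omega$. The idea is to partition the unit torus into $N$-many boxes (appropriately counted) and extract a collision, which will provide the simultaneous approximation after dividing through by the index gap.

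First, I fix a positive integer $Q$ with $Q^d < N \leq (Q+1)^d$, so $Q^d<N$ and $Q \geq N^{1/d}-1$; more conveniently, I choose $Q$ as the largest integer with $Q<N^{1/d}$ when $N^{1/d}\notin\N$, and reduce by one in the integer case. I then partition the half-open cube $[0,1)^d$ into $Q^d$ pairwise disjoint half-open boxes of side $1/Q$. Next, I consider the $Q^d+1$ points
\[
x_k := k\omega \bmod \Z^d \in [0,1)^d, \qquad k=0,1,\dots,Q^d.
\]
Since there are more points than boxes, by the pigeonhole principle there exist indices $0\leq k_1<k_2\leq Q^d$ such that $x_{k_1}$ and $x_{k_2}$ lie in the same half-open box. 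Setting $n:=k_2-k_1$ and $p:=\lfloor k_2\omega\rfloor-\lfloor k_1\omega\rfloor\in\Z^d$ (floor componentwise), the two points being in the same half-open box of side $1/Q$ gives the strict componentwise bound
\[
|n\omega-p|_\infty < \frac{1}{Q}.
\]

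Finally, I define $\omega_*:=p/n\in\Q^d$, so that $n\omega_*=p\in\Z^d$ by construction. Dividing the previous inequality by $n$ and using $Q\geq N^{1/d}$ (strictly, by the choice of $Q$ in the integer case), I obtain
\[
|\omega-\omega_*| < \frac{1}{n Q} \leq \frac{1}{n N^{1/d}},
\]
while $n\leq Q^d < N$. This yields all three conclusions of the theorem.

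The conceptual step is a single application of pigeonhole; there is no genuine obstacle. The only delicate bookkeeping is ensuring the two strict inequalities $n<N$ and $|\omega-\omega_*|<1/(nN^{1/d})$ hold simultaneously, which requires the careful choice of $Q$ in the borderline case when $N^{1/d}$ is an integer, together with the use of half-open (rather than closed) boxes to avoid equality at box boundaries.
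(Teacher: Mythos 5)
The paper does not prove this statement (it is cited to Cassels), so the only question is whether your argument is correct — and it is not: the final inequality reverses.

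You choose $Q$ to be the largest integer strictly below $N^{1/d}$ (or $N^{1/d}-1$ in the integer case), so by construction $Q<N^{1/d}$, which you need in order to guarantee $n\le Q^d<N$. But then in the last display you invoke ``$Q\ge N^{1/d}$'' to pass from $1/(nQ)$ to $1/(nN^{1/d})$. This directly contradicts the choice you just made: since $Q<N^{1/d}$, one has $1/Q>1/N^{1/d}$, so $|\omega-\omega_*|<1/(nQ)$ is a \emph{weaker} bound than the one claimed, and the chain of inequalities breaks. This is not a bookkeeping issue that a more careful choice of $Q$ repairs. The naive pigeonhole with $Q^d$ boxes of side $1/Q$ and $Q^d+1$ iterates simultaneously forces $n\le Q^d$ and $|n\omega-p|_\infty<1/Q$; to get $n<N$ you need $Q^d<N$, i.e.\ $Q<N^{1/d}$, while to get $|n\omega-p|_\infty<N^{-1/d}$ you need $Q\ge N^{1/d}$. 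These two requirements are incompatible unless $N^{1/d}$ is exactly a positive integer (e.g.\ $d=2$, $N=5$: $Q=2$ gives the count bound but only accuracy $1/2>1/\sqrt5$, while $Q=3$ gives accuracy $1/3$ but allows $n$ up to $9$). The genuine content of the theorem for arbitrary real $N>1$ is precisely this gap, and it is closed not by the box principle alone but by Minkowski's convex body theorem (applied to the parallelepiped $\{|x_0|\le N,\ |x_0\omega_i-x_i|\le N^{-1/d}\}$ in $\R^{d+1}$, whose volume is $2^{d+1}$), or equivalently by Cassels' transference/linear-forms argument in the reference cited. Your proof as written only establishes the statement when $N$ is a perfect $d$-th power.
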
	

If the  frequency map $\omega:I\mapsto h_0'(I)$ is 
defined by a strongly convex function $h_0$,  we can prove a similar
result in the space of actions.

\begin{lemma}\label{Le:covering}
Let a convex set $U_\delta\subset \R^d$ be a $\delta$-neighbourhood of a  set $U \subset \R^d$.
If $h_0$ is strongly convex in $U_\delta$ with parameter $\nu$,
then there is $N_0=N_0(\nu,\delta)$ such that for any $N>N_0$ and any $I_0 \in
U$ there is a point $I_*\in U_\delta$ and $n\in\N$ such that $n < N$, 
$n\omega(I_*)\in\mathbb Z^d$ and 
\[|I_0 -I_*|
<\frac{\sqrt d}{
\nu n N^{1/d}}.\] 
\end{lemma}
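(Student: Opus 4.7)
The plan is to use Dirichlet's Theorem~\ref{Thm:Dirichlet} in frequency space to locate a strongly resonant vector near $\omega(I_0)$ and then to invert the frequency map $\omega=h_0'$, using its strong monotonicity, to lift this vector back to an action $I_*$ close to $I_0$. The only non-trivial point is to verify that the lifted action lies inside $U_\delta$.

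Concretely, applying Theorem~\ref{Thm:Dirichlet} to $\omega(I_0)\in\R^d$ with the given $N$ produces $n\in\N$ with $n<N$ and $\omega_*\in\Q^d$ with $n\omega_*\in\Z^d$ and $|\omega(I_0)-\omega_*|<\sqrt d/(nN^{1/d})$ in the Euclidean norm (the factor $\sqrt d$ absorbs the $\ell^\infty$-to-$\ell^2$ conversion). Next, consider the auxiliary function $\phi(I)=h_0(I)-\omega_*\cdot I$: it is strongly convex on $U_\delta$ with the same constant $\nu$ and has gradient $\omega(I)-\omega_*$. Pick any radius $\bar r$ with $\sqrt d/(\nu N^{1/d})<\bar r<\delta$ (possible provided $N>(\sqrt d/(\nu\delta))^d$); since $I_0\in U$, the closed ball $\overline{B(I_0,\bar r)}$ lies in $U_\delta$, so $\phi$ attains its unique minimum there at some $I_*$. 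If $I_*$ is interior then $\phi'(I_*)=0$ gives $\omega(I_*)=\omega_*$, hence $n\omega(I_*)\in\Z^d$, and \eqref{Eq:strongconv} applied to the pair $(I_0,I_*)$ combined with Cauchy--Schwarz yields
\[
\nu|I_0-I_*|^2\le(\omega(I_0)-\omega(I_*))\cdot(I_0-I_*)\le|\omega(I_0)-\omega_*|\,|I_0-I_*|,
\]
so that $|I_0-I_*|\le|\omega(I_0)-\omega_*|/\nu<\sqrt d/(\nu n N^{1/d})$, as claimed.

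The main obstacle is ruling out that the minimum of $\phi$ sits on the boundary of $\overline{B(I_0,\bar r)}$. If $|I_0-I_*|=\bar r$, the first-order optimality condition reads $\phi'(I_*)\cdot(I_0-I_*)\ge 0$, that is $(\omega(I_*)-\omega_*)\cdot(I_0-I_*)\ge 0$. Substituting into \eqref{Eq:strongconv} gives
\[
\nu\bar r^2\le(\omega(I_0)-\omega(I_*))\cdot(I_0-I_*)\le(\omega(I_0)-\omega_*)\cdot(I_0-I_*)<\frac{\sqrt d\,\bar r}{N^{1/d}},
\]
which forces $\bar r<\sqrt d/(\nu N^{1/d})$, contradicting the choice of $\bar r$. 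Setting $N_0=(\sqrt d/(\nu\delta))^d$ therefore delivers the claimed threshold, with all constants explicit.
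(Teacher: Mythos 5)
Your proof is correct and follows the same overall strategy as the paper's (apply Dirichlet to $\omega(I_0)$ to produce a resonant frequency $\omega_*$, then use the strong-monotonicity estimate coming from \eqref{Eq:strongconv} to transfer the frequency-space bound to action space, with the same $N_0=(\sqrt d/(\nu\delta))^d$). Where you genuinely differ is in how you establish that $\omega_*$ actually has a preimage $I_*\in U_\delta$: the paper derives injectivity of $\omega$ and the Lipschitz-type lower bound $|\omega(I_1)-\omega(I_2)|\ge \nu d^{-1/2}|I_1-I_2|$ and then asserts that $\omega_*=\omega(I_*)$ for some $I_*$ in $B(I_0,\delta)$, leaving the surjectivity of $\omega$ onto a neighbourhood of $\omega(I_0)$ implicit (it is a standard consequence of strong monotonicity, but it is not spelt out). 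You supply that missing step constructively by minimising the auxiliary strongly convex function $\phi(I)=h_0(I)-\omega_*\cdot I$ over a compact ball $\overline{B(I_0,\bar r)}\subset U_\delta$ and ruling out a boundary minimiser via the first-order optimality condition together with \eqref{Eq:strongconv}. This makes the existence of $I_*$ elementary and self-contained, at the modest cost of introducing the auxiliary radius $\bar r$; it also directly places $I_*$ in a ball of radius $\bar r<\delta$ rather than just in the $\delta$-neighbourhood. All constants and the final bound $|I_0-I_*|<\sqrt d/(\nu n N^{1/d})$ match the paper's.
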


\begin{proof}
Using the strong convexity of the function $h_0$  in the form \eqref{Eq:strongconv} we get 
\[
|\omega(I_1)-\omega(I_2)|_2\,|I_1-I_2|_2\ge
(\omega(I_1)-\omega(I_2)) \cdot (I_1-I_2) 
\ge \nu |I_1-I_2|_2^2
\]
for all $I_1,I_2\in U_\delta$ and consequently
\(|\omega(I_1)-\omega(I_2)|_2\ge \nu |I_1-I_2|_2.
\)
For the sake of convenience we  use the Euclidean norm in this bound. 
It follows that $I_1\ne I_2$ implies $\omega(I_1)\ne\omega(I_2)$ and consequently  the map $\omega:U_\delta\to \omega(U_\delta)$ is bijective.

 Now let $I_0\in U$ and $N\in\N$. The Dirichlet theorem implies that there is
$\omega_*\in \Q^d$ 
such that $n \omega_*\in\Z^d$ for some $n<N$ and $|\omega(I_0)-\omega_*|<n^{-1}N^{-1/d}$.
We note that if $|I_0-I_*|<\delta$ then
\[|\omega(I_0)-\omega(I_*)|\ge \frac{1}{\sqrt d}
|\omega(I_0)-\omega(I_*)|_2
\ge \frac{\nu}{\sqrt d} |I_0-I_*|_2
\ge
 \frac{\nu}{\sqrt d} |I_0-I_*|.\]
 Consequently, if $n^{-1}N^{-1/d}<\nu d^{-1/2} \delta$ then $\omega_*=\omega(I_*)$
 for some $I_*$ with $|I_0-I_*|<\delta$.
 Let  $N_0^{-1/d}=\nu d^{-1/2} \delta$.
 Then for any $N>N_0$ there is $I_*\in U_\delta$ such that $n\omega(I_*)\in\Z^d$
 for $n<N$ and
 \[
 |I_0-I_*|<\frac{\sqrt d}{\nu}|\omega(I_0)-\omega_*|<\frac{\sqrt d}{\nu n N^{1/d}}.\qedhere
 \]
\end{proof}

We conclude that if $\varepsilon_0^{d/2(d+1)}<1/N_0$ and $\gamma_0 r_0 > \sqrt d \nu^{-1} $,
then the balls $B(I_*,r_0\rho_n)$ with $n$ and $I_*$ such that $n<\varepsilon^{-d/2(d+1)}$ and $n\omega(I_*)\in\Z^d$ cover $B_R$.
Consequently every initial condition belongs to a neighbourhood of a resonant torus $I=I_*$ where the stability bounds of Theorem~\ref{Thm:longstab} are applicable and Theorem~\ref{Thm:Nekhoroshevestimates} follows immediately.
We note that there is no claim of uniqueness for $I_*$ and some initial conditions
may belong to several zones of stability.

\section{A priori bounds and strategy of the proof\label{Se:strategy}}

The $n^{\mathrm{th}}$ iterate of the map $F_\varepsilon$ can be written
explicitly in the form
\begin{equation} \label{Eq:fepsn}
\left\{
\begin{aligned}
	I_n&=I_0+\varepsilon \sum_{k=0}^{n-1}a(I_k,\varphi_k),\\
	\varphi_n&=\varphi_0+\sum_{k=0}^{n-1}\omega(I_k)+\varepsilon\sum_{k=0}^{n-1} b(I_k,\varphi_k) ,
\end{aligned}	
\right.
\end{equation}
where $(I_k,\varphi_k)=F_\varepsilon(I_{k-1},\varphi_{k-1})$
denote points on the trajectory with initial conditions $(I_0,\varphi_0)$.
We can slightly overload our notation by 
assuming that the angle component 
in this formula is computed without 
taking the angle modulo one.
We hope that this will not create too much confusion
as the functions $a$ and $b$ are periodic in $\varphi$.
The following simple lemma implies that 
 trajectories of $F_\varepsilon$ follow
rather closely trajectories of the unperturbed integrable map $F_0$
for times much shorter than $T_L\sim\varepsilon^{-1/2}$.

\begin{lemma}[a priori bounds]
\label{Le:apriori}
Suppose that the map $F_\varepsilon$
has an analytic continuation onto the complex domain $\mathcal D_F$
and $n\in\N$. 
If
 $(I_k,\varphi_k)
 \in \mathcal{D}_F$ for $0\le k <  n$, then
\begin{equation}
 \label{apriori}
 |I_n-I_0|\le C_1 n\varepsilon,
\qquad
|\varphi_n-\varphi_0-n\omega(I_0)|\le C_2 n^2\varepsilon.
\end{equation}
where $C_1=\|a\|$ and $C_2=\tfrac12\|\omega'\| \|a\| +\|b\|$.
\end{lemma}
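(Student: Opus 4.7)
The plan is to bound both quantities directly from the explicit formula \eqref{Eq:fepsn} for the $n^{\mathrm{th}}$ iterate, using only the supremum norms $\|a\|$, $\|b\|$ and $\|\omega'\|$ on $\mathcal D_F$. The hypothesis that $(I_k,\varphi_k)\in\mathcal D_F$ for every $0\le k<n$ is precisely what lets these supremum norms be applied at each point appearing in the sums. No induction or fixed point argument is required: a single telescoping pass will do, in which the action bound, read off once for each intermediate index, feeds directly into the angle bound.

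For the first inequality I would just take absolute values in $I_n-I_0=\varepsilon\sum_{k=0}^{n-1}a(I_k,\varphi_k)$, yielding $|I_n-I_0|\le \varepsilon n\|a\|=C_1 n\varepsilon$. Exactly the same argument with an arbitrary intermediate index $m\in\{1,\dots,n\}$ in place of $n$ produces the partial bound $|I_m-I_0|\le C_1 m\varepsilon$; this is the only consequence of the action estimate that I actually need when dealing with the angle.

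For the second inequality I would subtract $n\omega(I_0)$ in the $\varphi_n$ formula and decompose
\[
\varphi_n-\varphi_0-n\omega(I_0)=\sum_{k=0}^{n-1}\bigl(\omega(I_k)-\omega(I_0)\bigr)+\varepsilon\sum_{k=0}^{n-1}b(I_k,\varphi_k).
\]
The second sum is bounded above by $n\varepsilon\|b\|$. For the first, convexity of the $I$-projection of $\mathcal D_F$ lets me integrate $\omega'$ along the segment $[I_0,I_k]$ and obtain the mean value estimate $|\omega(I_k)-\omega(I_0)|\le \|\omega'\|\,|I_k-I_0|$; inserting the partial action bound $|I_k-I_0|\le C_1 k\varepsilon$ and summing $\sum_{k=0}^{n-1}k=\tfrac12 n(n-1)\le \tfrac12 n^2$ yields the $\tfrac12\|\omega'\|\,\|a\|\,n^2\varepsilon$ contribution. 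Combining with the $b$-piece, and absorbing $n\le n^2$ (valid for $n\ge 1$) into the $\|b\|$ term, gives exactly $C_2 n^2\varepsilon$ with the stated constant.

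There is no substantive obstacle. The only conceptual points worth flagging are that convexity of the $I$-slice of $\mathcal D_F$ is what legitimises the mean value inequality for $\omega$, and that the passage from a linear-in-$n$ action bound to a quadratic-in-$n$ angle bound is the typical signature of this kind of elementary a priori estimate, setting the Lyapunov timescale $n\sim \varepsilon^{-1/2}$ on which these bounds cease to be useful and a more refined averaging argument must take over.
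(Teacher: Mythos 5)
Your proof is correct and follows essentially the same route as the paper: telescope the action equation, then feed the partial action bound $|I_k-I_0|\le C_1 k\varepsilon$ into the mean value estimate for $\omega(I_k)-\omega(I_0)$ and sum the arithmetic progression. The explicit appeal to convexity of the $I$-slice of $\mathcal D_F$ (which holds since that slice is a $\sigma$-neighbourhood of a ball) is a reasonable extra remark, but otherwise the two arguments are identical.
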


\begin{proof}
Since the iterates of the initial point belong
to $\mathcal D_F$ the triangle inequality implies that
$|I_n - I_0|  \le n \varepsilon \| {a}\|$. 
Then
\begin{align*}
|\varphi_n - \varphi_0 - n \omega(I_0) | &  
\le  \sum_{k=0}^{n-1} |\omega(I_k) - \omega(I_0)| + n \varepsilon \|{b}\| 
\\ & \le \|w'\| \sum_{k=0}^{n-1} |I_k -I_0|   + n  \varepsilon\|b\| 
\\ & \le  \|w'\|  \sum_{k=0}^{n-1} k\varepsilon \|{a}\|   + n \varepsilon \|b\| 
 \le \frac{n^2\varepsilon}{2}\|\omega'\| \,\|a\| +n\varepsilon\|b\|
\end{align*}
and the desired estimate follows  immediately as $n^2\ge n$.
\end{proof}

In a way similar to Lochak-Neishtadt's proof of the Nekhoroshev theorem \cite{LochakN92},
we analyse dynamics in carefully chosen neighbourhoods of
unperturbed tori bearing periodic motions. Let $n\,\omega(I_*)\in\Z^d$ for some
$n\in\N$ and $I_*\in\R^d$. The equation $I=I_*$ defines a torus filled with
periodic orbits of the integrable map $F_0$. 
The point  $I_*$ corresponds to a resonance of the maximal
multiplicity because the set $\left\{r\in \Z^d: r \cdot \omega(I_*) = 0 \,
(\text{mod }1)\right\}$  contains the $d$-dimensional sublattice $n\Z^d$.

Since  $n\omega(I_*)\in \Z^d$ we
can consider another  lift of $F_\varepsilon^n$ defined by the
equation   
\begin{equation} \label{mapFn}
    f_\varepsilon^n:(I_0,\varphi_0)\mapsto(I_n,\varphi_n-n\omega(I_*)).
\end{equation}
Of course, the maps $F_\varepsilon^n$ and $f_\varepsilon^n$ define the same trajectories when the angle
variables are considered modulo one.
In order to prove Theorem~\ref{Thm:longstab} we  restrict our attention
to $n<N_\varepsilon$ with
\begin{equation}  \label{Nmax_nearid}
 N_\varepsilon=\varepsilon^{-d/2(d+1)},
\end{equation}
and study the dynamics of
$f_\varepsilon^n$ on the domain 
\begin{equation}\label{Eq:D(I*)}
\mathcal D_0(I_*)=B(I_*,\rho_n) \times \R^d    
\end{equation}
where the radius of the ball 
$
\rho_n=\rho_\varepsilon/n
$
with
\begin{equation}\label{Eq:rho_eps}
\rho_\varepsilon=\gamma N_\varepsilon^{-1/d}=\gamma \varepsilon^{1/2(d+1)} 
\end{equation}
and $\gamma$ is a constant independent of $n$ and $\varepsilon$.

For $\varepsilon=0$ the map  takes the form
\[
f_0^n:(I,\varphi)\mapsto (I,\varphi +n\omega(I)-n\omega(I_*)).
\]
Consequently the set defined by $I=I_*$ consists of fixed points. 
It is also easy to see that $f_0^n$ coincides
with the time-one map of the integrable flow defined by
the Hamiltonian function
\[
h_n(I)=n\bigl(h_0(I)-h_0(I_*)-\omega(I_*)\cdot(I-I_*)\bigr).
\]
Using Lemma~\ref{Le:apriori}  we will check that in $\mathcal D(I_*)$,  a
suitable complex neighbourhood of $\mathcal D_0(I_*)$, the lift $f_{\varepsilon}^n$ 
is close to the identity.

In Section~\ref{Se:NeishtadtThm} we prove a refined version of Neishtadt's theorem 
which establishes explicit bounds for the error of approximation of 
a near-the-identity symplectic map by an autonomous Hamiltonian flow.
In Section~\ref{Se:intrepol_at_Torus} we will check that this theorem
can be used to show that
$f_{\varepsilon}^n$ is exponentially close
to the time-one map of a Hamiltonian flow $\hat{X}_m = \J \nabla H_m$ where $\J$
is the standard symplectic matrix. 
Here the subscript $m$ refers to the fact
that $H_m$ is obtained from an interpolating vector field based on $m$ consecutive iterates of the map $f_\varepsilon^n$.
We will use $m \sim 1/\epsilon_n$ where
$\epsilon_n$ is the distance from $f_{\varepsilon}^n$
to the identity map in $\mathcal
D(I_*)$. Then in Section~\ref{Se:stability}  we will show that 
\[
H_m(I,\varphi) = h_n(I)+ w_{n,m}(I,\varphi),
\]
where 
the perturbative term $w_{n,m}$ is negligible
for the purpose of the stability analysis.
The convexity of $h_0$ and the Taylor formula imply that
\[
\tfrac1{2} \nu n \,|I-I_*|^2\le h_n(I)\le \tfrac d2\|h''\|n \, |I-I_*|^2.
\]
These inequalities imply that there is $r_0\in (0,1)$ such that if $I\in B(I_*,r_0\rho_n)$ then the corresponding energy level set of $H_m$ is located inside $B(I_*,\rho_n)$. Consequently, the  trajectories of the Hamiltonian flow which start in the smaller ball  will never leave the larger one.
Then we can easily conclude that a trajectory
of the map is trapped in such a neighbourhood for
exponentially long times as 
one iterate of $ f^n_\varepsilon$
changes the energy by an exponentially small quantity. So we will need exponentially many iterates to reach a change in the energy
needed to leave the domain. While the action coordinates remain
in $B(I_*,\rho_n)$ the  oscillations
of  $I$ will not exceed the diameter $2\rho_n=2\gamma\varepsilon^{1/2(d+1)}/n$.
This completes the sketch of proof of Theorem~\eqref{Thm:longstab}.

\begin{remark}
    The choices of $N_\varepsilon$ and $\rho_\varepsilon$ originate from the following reasoning. The distance to the identity in the angle component of $f_\epsilon^n$ is proportional to $n\rho_n=\rho_\varepsilon$. Using  Theorem~\ref{Thm:alaNeishtadt} we will get interpolation error of the order of $O(\exp(-c/\rho_\varepsilon))$. In order to achieve longer stability times we would like to reduce $\rho_\varepsilon$. Then we also get sharper estimates for  changes in actions. On the other hand,  there are two factors which limit our ability to decrease $\rho_\varepsilon$.
    \begin{itemize}
        \item[{(1)}] In the Hamiltonian $H_m$ the integrable part (represented by $h_n$
        which depends on $I$ only) is to dominate $w_{n,m}\sim n\varepsilon$.
    The integrable part
    is approximately quadratic in actions, i.e. $h_n\sim n\rho_n^2$. Therefore
    we need $n\rho_n^2\gg n\varepsilon$ for all $n<N_\varepsilon$ or equivalently
    \[
    \rho_\varepsilon\gg \varepsilon^{1/2}N_\varepsilon.
    \]
    
        \item[{(2)}] The sizes of $\rho_n$ are to be sufficiently large to ensure 
     that the balls $B(I_*,\rho_n)$ cover all actions. It is sufficient to assume
    \[
    \rho_\varepsilon\gg N_\varepsilon^{-1/d} \, .
    \] 
  
    \end{itemize}     
The sharpest bounds are achieved
when these two restrictions are of the same order in $\varepsilon$. In particular we can choose
\(
N_\varepsilon=\varepsilon^{-d/2(d+1)}.
\)
\end{remark}

\section{Interpolating vector fields\label{Se:interpolatingVF}}

The interpolating vector fields were originally introduced in \cite{GelfreichV18} and used to approximate  dynamics of a near-the-identity map by the
flow of a vector field $X_m$ obtained by taking a weighted average of several consecutive iterates of
the map.
In this paper we use a similar construction based on the Newton interpolation scheme which uses the forward orbit $x_0,\ldots,x_m$ for the construction of $X_m(x_0)$.
We will show that this interpolation scheme is sufficiently accurate
for our proof of  the Nekhoroshev theorem. 

Let us describe the construction of an interpolating vector field $X_m$.
Let ${U} \subset \R^s$ be an open  domain,
 $f: {U} \to\R^s$  a real analytic function
 and $m\in \N$.
Suppose that there is a subset
$ U_0\subset U$
such that $f^k( U_0)\subset U$ for $0\le k\le m$.
Then the iterates $x_k=f^k(x_0)$ are defined for all $k\le m$ and  $x_0\in U_0$.  
There is a unique polynomial $P_m(t;x_0)$ 
of degree $m$ in $t$
such that $P_m(k;x_0)=x_k$ for $0\le k\le m$.
The interpolating vector field is defined
by
\[
X_m(x_0)=\frac{\partial P(0;x_0)}{\partial t}.
\]
The polynomial $P_m$ can be obtained
with the help of the 
Newton finite-difference interpolation scheme.
Consider the following finite differences:
\begin{equation}\label{Eq:finite-diff}
	\Delta_0(x)=x,
	\qquad 
	\Delta_{k}(x)=\Delta_{k-1}(f(x))-\Delta_{k-1}(x),
	\ k\ge 1.
\end{equation}
Then the Newton interpolating polynomial with equally spaced data points and
step $h=1$ takes the form
$$
P_m(t;x_0)=x_0+\sum_{k=1}^m\frac{\Delta_k(x_0)}{k!}t(t-1)\dots(t-k+1).
$$
Differentiating $P_m(t;x_0)$ with respect to $t$ at $t=0$, we get
\begin{equation}\label{Eq:interpol_VF}
	X_m(x_0)=\sum_{k=1}^m\frac{(-1)^{k-1}}{k}\Delta_k(x_0).
\end{equation}

\begin{remark}
We  say that $X_m$ is obtained by application of a {\em discrete averaging\/}
procedure to the map $f$ as the sum in 
\eqref{Eq:interpol_VF}
is a weighted average of $x_0,\ldots,x_m$. Namely
\begin{equation}\label{Eq:interpol_VF_p_nk}
	X_m(x_0)=\sum_{k=0}^m p_{mk}f^k(x_0)
\end{equation}
where  the coefficients $p_{mk}$ do not depend on the map and can be found explicitly:
$p_{m0}$ is the harmonic number and for $k>1$
\[
p_{mk}=(-1)^{k+1}\frac{m+1-k}{k(m+1)}\binom{m+1}{k}.
\]
We skip the derivation of these coefficients.
\end{remark}

This construction  can be applied  not only to a single map $f$ but also to a family of maps.
In the case of a tangent to the identity family, we can use the interpolation
procedure to recover coefficients of a formal embedding into a formal vector field.

Let us state our claim more formally. Let $ U\subset\R^s$ be an open set.
Suppose that $f_\mu: U\to \mathbb R^s$ is an analytic family tangent to the identity.
In other words, the maps are defined on $U$ for $|\mu|\le \mu_0$
and $f_0$ is the identity map, $f_0(x)=x$ for all $x\in U$.
Since the set $ U$ is open, for every $x_0\in  U$
and every $m\in\mathbb N$ we can find $\mu_m(x_0)>0$
such that $x_k=f_\mu^k(x_0)\in U$ for all $|\mu|\le \mu_m(x_0)$
and $|k|\le m$. Then the interpolating vector field $X_m$ is an analytic
function of $x$ and $\mu$ in a neighbourhood of $x=x_0$ and $\mu=0$.
The following lemma shows that the Taylor expansion of the right-hand side of
\eqref{Eq:interpol_VF} in powers of $\mu$ coincides with the
formal vector field up to the order $m$.

\begin{lemma}[formal interpolation] \label{mjet1}
There is a unique sequence of analytic functions $g_k:U\to\R^s$, $k\in\N$, such that 
for every $m\in\N$ and every $x\in  U$
\[
f_\mu(x)-\Phi^1_{G_m}(x)=O(\mu^{m+1})
\]
where $\Phi^1_{G_m}$ is the time one-map of the vector field $G_m=\sum_{k=1}^m\mu^kg_k$.
Moreover, the interpolating vector field \eqref{Eq:interpol_VF} of order $m$
satisfies 
\[
X_m(x)=G_m(x)+O(\mu^{m+1}).
\]
\end{lemma}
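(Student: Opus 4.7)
My plan is to construct the $g_k$ inductively by matching coefficients of $\mu$, and then to show that the Newton interpolating vector field $X_m$ agrees with $G_m$ modulo $\mu^{m+1}$ by combining the classical Newton remainder formula with the fact that $f_\mu^k(x_0)$ and $\Phi^k_{G_m}(x_0)$ agree at the interpolation nodes to the required order.

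\textbf{Construction and uniqueness of the $g_k$.} Since $f_0$ is the identity, I can write $f_\mu(x)=x+\sum_{j\ge 1}\mu^j a_j(x)$ with $a_j$ analytic on $U$. Expanding the time-one map of $G_m=\sum_{k=1}^m \mu^k g_k$ via the Lie series produces a formal power series in $\mu$ whose coefficient of $\mu^j$ for $j\le m$ has the shape $g_j(x)+Q_j(g_1,\ldots,g_{j-1})(x)$, where $Q_j$ is a universal differential polynomial that does not depend on $m$. Equating $\mu^j$-coefficients of $f_\mu$ and $\Phi^1_{G_m}$ for $j=1,\ldots,m$ determines $g_j$ uniquely and recursively: $g_1=a_1$ and $g_j=a_j-Q_j(g_1,\ldots,g_{j-1})$. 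The same recursion gives uniqueness of the whole sequence, since two solutions must coincide order by order.

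\textbf{From formal embedding to interpolation.} By construction,
\[
f_\mu(x)-\Phi^1_{G_m}(x)=O(\mu^{m+1}).
\]
Composing this relation $k$ times for $0\le k\le m$ and using the boundedness of derivatives of $\Phi^k_{G_m}$ on the shrinking neighbourhood $|\mu|\le\mu_m(x_0)$ built into the statement, a telescoping argument gives
\[
f_\mu^k(x_0)-\Phi^k_{G_m}(x_0)=O(\mu^{m+1}),\qquad 0\le k\le m.
\]
Let $\tilde P_m(t;x_0)$ be the Newton polynomial of degree $m$ that interpolates $\psi(t):=\Phi^t_{G_m}(x_0)$ at the nodes $t=0,1,\ldots,m$. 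Since Newton interpolation is a linear functional of the nodal data with $m$-independent coefficients, the previous display implies $P_m(t;x_0)-\tilde P_m(t;x_0)=O(\mu^{m+1})$ uniformly for $t$ in a bounded set, whence $X_m(x_0)=\partial_t\tilde P_m(0;x_0)+O(\mu^{m+1})$.

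\textbf{Comparing with $G_m$.} The classical Newton remainder formula yields
\[
\psi(t)-\tilde P_m(t;x_0)=\frac{\psi^{(m+1)}(\xi(t))}{(m+1)!}\,t(t-1)\cdots(t-m),
\]
and differentiating at $t=0$ gives
\[
\psi'(0)-\partial_t\tilde P_m(0;x_0)=\frac{(-1)^m}{m+1}\,\psi^{(m+1)}(\xi)
\]
for some $\xi\in[0,m]$. Since $\psi'(0)=G_m(x_0)$ and each $t$-derivative of $\Phi^t_{G_m}$ applies an extra Lie derivative along $G_m=O(\mu)$, one has $\psi^{(m+1)}(\xi)=O(\mu^{m+1})$, and combining the two displays yields $X_m(x_0)=G_m(x_0)+O(\mu^{m+1})$. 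The main obstacle I anticipate is the telescoping control in the second step: iterating a near-identity map $m$ times could in principle inflate the constant hidden in $O(\mu^{m+1})$, and the argument works only on the neighbourhood where all $m$ consecutive iterates stay inside $U$, which is exactly the $\mu_m(x_0)$-condition appearing in the hypotheses.
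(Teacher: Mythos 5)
Your proof is correct, but the final step is genuinely different from the paper's. The paper observes that the $\mu$-coefficients $a_j(x,t)$ of $\Phi^t_{G_m}(x)$ are polynomials in $t$ of degree $j\le m$, so that the degree-$m$ interpolating polynomial reproduces $\sum_{j\le m}\mu^j a_j(x,t)$ exactly; the whole interpolation error is then carried by the $O(\mu^{m+1})$ remainder, and differentiating at $t=0$ gives the conclusion immediately. You instead introduce an auxiliary interpolant $\tilde P_m$ of the true flow $\psi(t)=\Phi^t_{G_m}(x_0)$, note that $P_m-\tilde P_m=O(\mu^{m+1})$ because the nodal values differ by that amount, and then control $\psi'(0)-\tilde P_m'(0)$ by the Newton remainder formula together with the observation that each $t$-derivative of $\Phi^t_{G_m}$ applies a Lie derivative along $G_m=O(\mu)$, so $\psi^{(m+1)}=O(\mu^{m+1})$. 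Your route is more analytic and relies on a slightly different structural fact (the $\mu$-gain under $t$-differentiation rather than the polynomial degree of $a_j$ in $t$); the paper's route is more algebraic and avoids invoking the remainder formula at all.

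One point to tighten in your write-up: differentiating the Lagrange remainder $\psi(t)-\tilde P_m(t)=\frac{\psi^{(m+1)}(\xi(t))}{(m+1)!}\,t(t-1)\cdots(t-m)$ at $t=0$ is not immediate because $\xi(t)$ need not be differentiable. Since $t=0$ is a node, $\omega(t):=t(t-1)\cdots(t-m)$ vanishes there, so you can either take the limit of the difference quotient (using boundedness of $\psi^{(m+1)}$ on $[0,m]$) or use the divided-difference representation $\psi(t)-\tilde P_m(t)=[\psi(0),\ldots,\psi(m),\psi(t)]\,\omega(t)$, giving $\psi'(0)-\tilde P_m'(0)=[\psi(0),\ldots,\psi(m),\psi(0)]\,\omega'(0)$ and then the mean-value form componentwise. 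This closes the gap and does give the stated $\frac{(-1)^m}{m+1}\psi^{(m+1)}(\xi)$.
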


\begin{proof}
Let $x\in U$. Then $f_\mu(x)=x+\sum_{k=1}^\infty \mu^k f_k(x)$. The radius of convergence may depend on $x$.

For any sequence of coefficients $g_k$,
 the time $t$ map of the vector field $G_m$ is an analytic function of $\mu$ in a 
 neighbourhood of $x$ provided $t$ is sufficiently small,
 \[
 \Phi^t_{G_m}(x)=\sum_{k=0}^\infty \mu^k a_{m,k}(x,t).
 \]
The flow is a solution of the initial value problem 
 \[
 \partial_t  \Phi^t_{G_m}(x) = G_m(\Phi^t_{G_m}(x)),\qquad  \Phi^0_{G_m}(x)=x.
 \]
 The initial condition implies $a_0(x,0)=x$.
Since the series $G_m$  starts with $k=1$ the differential equation implies that  $\partial a_0(x,t)=0$ for all $t$.
Consequently, $a_0(x,t)=x$ for all $t$.

The initial condition  implies $a_k(x,0)=0$ for $k\ge1$,
 and the differential equation with $t=0$ implies that $\partial_t a_k(x,0)=g_k(x)$
 for $1\le k\le m$.
 Collecting the terms of the first order in $\mu$ we get
 \[
 \partial_t a_1(x,t)=g_1(x).
 \]
 Consequently $a_1(x,t)=g_1(x) t$. 
 Using induction in $k$ and collecting Taylor coefficients of order $k$ in  $\mu$,
 it is not too difficult to prove that $a_k(x,t)$ are defined uniquely and
   are polynomial in $t$ of order $k$ with coefficients depending on $x$.
   Moreover for $k\le m$ we get $a_k(x,t)=t g_k(x)+t^2 b_k(x,t)$, where $b_k$ depends on
   $g_1,\ldots,g_{k-1}$ only. Therefore for  $k\le m$ we can set
   \[
   g_k(x)=f_k(x)-b_k(x,1)
   \]
   and get $a_k(x,1)=f_k(x)$.
   
   It is easy to check that if we repeat the procedure with $m$ replaced by $m+1$ 
   the values of $a_k$ with $k\le m$ are not affected. Consequently, the series $g_k$
   are defined uniquely for all $k$.

The smooth dependence of a flow on its vector field implies that, for any $x \in {U}$,
\[
\Phi^t_{G_m}(x)=\sum_{j=0}^{m}\mu^j a_j(x,t)+\mu^{m+1}r_m(x,t,\mu)
\]
where $r_m$ is a bounded function in $\mathcal V=B_r(x) \times [0,m] \times \{|\mu|\le \mu_m(x)\}$
and $r>0$ depends on $x$ and $m$.
Consequently, for all $0\leq k \leq m$, $\Phi^k_{G_m}$ and $f_\mu^k$ have a common
$m$-jet in $\mu$ at the point $x$ and
\[
x_k = f_\mu^k(x)=\Phi^k_{G_m}(x_0)+\mu^{m+1}q_{m,k}(x_0,\mu)
\]
where $q_{m,k}$ is a bounded function on $\mathcal{V}$. Combining these two bounds
we obtain
\[
x_k=
\sum_{j=0}^{m}\mu^j a_j(x_0,k)+\mu^{m+1}\left(q_{m,k}(x_0,\mu) +r_m(x_0,k,\mu) \right).
\]
The  interpolation by
a polynomial of degree $m$ is exact on polynomials of degree $m$. Consequently,
\[
P_m(t)=\sum_{j=0}^{m}\mu^j a_j(x_0,t)+\mu^{m+1} R_m(x_0,t)
\]
where  $R_m(x_0,t)$ is the polynomial of degree $m$ in $t$ which interpolates  the points $q_{m,k}(x_0,\mu)
+r_m(x_0,k,\mu)$ with the node $t=k$ and $0\leq k \leq m$.
Taking the derivative at $t=0$ we get
\[
X_m(x)=\sum_{j=0}^{m}\mu^j\dot a_j(x,0)+\mu^{m+1}\dot R_m(x_0,0)
=
G_m(x)+\mu^{m+1}\dot R_m(x_0,0),
\]
where, by \eqref{Eq:interpol_VF_p_nk}, we get 
\[
\dot R_m(x,0)=\sum_{k=0}^m p_{mk} \left(q_{m,k}(x,\mu) +r_m(x,k,\mu) \right).
\]
Hence $\dot R_m(x,0)$  is bounded on $\mathcal{V}$
and
 $X_m(x) = G_m(x)+O(\mu^{m+1})$. 
\end{proof}

\goodbreak

\section{Embedding a symplectic near-the-identity map into an autonomous Hamiltonian flow\label{Se:NeishtadtThm}}

Suppose that a symplectic map $f$ is $\epsilon$-close to the identity
on a complex $\delta$-neighbourhood of $D_0\subset \C^{2d}$.
The following theorem shows that if the ratio $\delta/\epsilon$ is sufficiently
large  then an interpolating vector field of  optimal order
$m\sim \delta/\epsilon$ provides exponentially accurate
approximation for the map $f$. In contrast to the classical
result of Neishtadt \cite{Neishtadt84} our theorem provides
explicit expressions for all constants. 
Therefore our theorem can be applied not only to members of a near-the-identity
family (where we are able to decrease $\epsilon$ when necessary) but to an
individual map as well. This subtle difference will play the key role in our
proof of the Nekhoroshev theorem.

Let $D_0\subset \mathbb C^{2d}$ 
and $D$ be a $\delta$-neighbourhood of $D_0$.
Suppose that a symplectic map $f:(p,q)\mapsto (P,Q)$
admits a generating function of the form
\begin{equation} \label{nearId_gf}
G(P,q)= Pq + {S}(P,q),
\end{equation}
i.e., the map is defined implicitly by the equations
$$
p=P+\frac{\partial S}{\partial q}(P,q), \quad Q=q+\frac{\partial S}{\partial P}(P,q).
$$
We assume that $S$ has an analytic continuation onto $D$ and we use
\begin{equation*}
\epsilon = \bigl\|
\nabla {S}\bigr\|_{D} 
=\sup_{(P,q)\in  D}\max\bigl\{\,|P-p|,|Q-q|\,\bigr\}
\end{equation*}
to characterise the closeness of $f$ to the identity.
Our definition
is slightly different from 
the traditional one where the supremum is
taken over the domain of the map 
while we use the domain of its generation function.
Our choice slightly simplifies analysis of transitions between 
a symplectic map and its generating function.

In the following theorem we use the infinity norm for vectors
and supremum norms for functions. We use $\left\lfloor \cdot \right\rfloor$ 
to denote the integer part of a number.

\begin{thm}\label{Thm:alaNeishtadt}
If $\displaystyle m = \left\lfloor \frac{\delta}{6\mathrm e\,\epsilon}-d
\right\rfloor\ge1$ and $X_m$ is the interpolating vector field
\eqref{Eq:interpol_VF} of order $m$, then  $\|X_m\|_{D_1}\le 2\epsilon$ and 
\[
\|\Phi_{X_{\mathrm{m}}}-f \|_{D_0}\le 
3\;\mathrm e^{d+1}\epsilon \exp\left(-\delta /(6\mathrm e\,\epsilon)\right),
\]
where $D_1$ is the $\frac{\delta}{2}$-neighbourhood of $D_0$.
Moreover there is a Hamiltonian  vector field
$\hat X_{m}$ such that 
\begin{equation*}
    \|\hat X_{m}- X_{m}\|_{D_1}
\le 
4\;\mathrm e^{d+1}\epsilon\exp\left(-\delta/(6\mathrm e \epsilon)\right),
\end{equation*}
and
\[
\|\Phi_{\hat X_{\mathrm{m}}}-f \|_{D_0}\le 
5\;\mathrm e^{d+1}\epsilon \exp\left(-\delta /(6\mathrm e\,\epsilon)\right).
\]
Moreover, $\|\hat X_m\|_{D_1}\le 4\epsilon$ and
\begin{equation}\label{Eq:XmLeadingOrder}
    \|\hat X_{m}-\mathrm J \nabla S\|_{D_1}\le 
\frac
{c\epsilon^2}{\delta} 
\;
\end{equation}
where $c=17(d+3)^2$ and $\mathrm J$ is the standard symplectic matrix.
\footnote{The constant in the estimate \eqref{Eq:XmLeadingOrder} is 
not optimal. It is obtained using the interpolation of the first order and  can be improved using a more accurate approximation for $H_m$.}
\end{thm}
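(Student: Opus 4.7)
The plan is to control the finite differences $\Delta_k$ in a nested sequence of complex neighborhoods via Cauchy estimates, from which all the bounds on $X_m$, the interpolation error, and the Hamiltonian approximation follow after combining with the exact-symplecticity of $f$. First I would introduce nested domains $D^{(j)}$ defined as $(\delta - j\sigma)$-neighborhoods of $D_0$, with a uniform shrinkage $\sigma$ chosen so that $D^{(m+d+1)} \supset D_1$. The a priori bound $\|f - \mathrm{id}\|_D \leq \epsilon$, which follows from the generating function estimate $\|\nabla S\|_D \leq \epsilon$ (the implicit relation is handled via a contraction argument on $(P,q)$), implies $f(D^{(j+1)}) \subset D^{(j)}$ provided $\sigma \geq \epsilon$, so all iterates $f^k$ for $0 \leq k \leq m+1$ are well-defined where needed.

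The key step is an inductive proof that $\|\Delta_k\|_{D^{(k)}} \leq \epsilon\, q^{k-1}$ for $1 \leq k \leq m+1$, with $q = 2d\epsilon/\sigma$. Starting from $\Delta_1 = f - \mathrm{id}$, the recursion $\Delta_{k+1}(x) = \Delta_k(f(x)) - \Delta_k(x)$ combined with the mean value theorem gives $\|\Delta_{k+1}\|_{D^{(k+1)}} \leq \epsilon \cdot \|D\Delta_k\|_{D^{(k+1)}}$, and a Cauchy inequality on a polydisk of radius $\sigma$ bounds the operator norm by $2d\|\Delta_k\|_{D^{(k)}}/\sigma$. Choosing $\sigma$ proportional to $\delta/(m+d+1)$ and setting $m = \lfloor \delta/(6\mathrm{e}\epsilon) - d \rfloor$ makes $q \leq 1/\mathrm{e}$, so that $\|X_m\|_{D_1}\leq \epsilon\sum_{k\geq 1} q^{k-1}/k \leq 2\epsilon$, and simultaneously the tail term $\|\Delta_{m+1}/(m+1)\|$ becomes exponentially small, of order $\mathrm{e}^{d+1}\epsilon\exp(-\delta/(6\mathrm{e}\epsilon))$.

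For the interpolation error I would exploit that $X_m$ is the truncated formal logarithm of $f$: Lemma~\ref{mjet1} identifies $X_m$ with the order-$m$ truncation of a formal vector field $G_\infty$ whose formal time-one flow equals $f$, so the bounds on the $\Delta_k$ imply that the tail $G_\infty - G_m$ is exp-small. A Gronwall-type comparison between $\Phi^1_{X_m}$ and $\Phi^1_{G_\infty} = f$, using $\|X_m\|\leq 2\epsilon$ to keep the trajectory inside $D_1$, then yields $\|\Phi_{X_m} - f\|_{D_0} \leq 3\mathrm{e}^{d+1}\epsilon\exp(-\delta/(6\mathrm{e}\epsilon))$ after tracking the constants.

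For the Hamiltonian refinement, the exact symplecticity of $f$ implies that the formal field $G_\infty$ is Hamiltonian, and I would set $\hat X_m = \mathrm J \nabla H_m$ where $H_m$ is the order-$m$ truncation of the corresponding formal Hamiltonian; then both $X_m$ and $\hat X_m$ agree with $G_\infty$ up to order $m$, and the difference is of the same exp-small magnitude as the interpolation error, yielding the stated bounds on $\|\hat X_m - X_m\|$ and $\|\Phi_{\hat X_m} - f\|$. The leading-order estimate $\|\hat X_m - \mathrm J\nabla S\|_{D_1} \leq c\epsilon^2/\delta$ follows by isolating the linear part: $\Delta_1 = f - \mathrm{id} = \mathrm J\nabla S + O(\epsilon^2/\delta)$ from inverting the generating function relation via Cauchy, while the sum $\sum_{k\geq 2}(-1)^{k-1}\Delta_k/k$ contributes only $O(\epsilon^2/\delta)$ because each higher $\Delta_k$ is already $O(\epsilon \cdot \epsilon/\sigma)$. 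The main obstacle is the delicate constant chasing in the $\Delta_k$ bounds: attaining the specific form $m = \lfloor \delta/(6\mathrm{e}\epsilon) - d\rfloor$ requires balancing the shrinkage $\sigma$, the dimensional Cauchy factor $2d$, and the geometric rate $q$, with the factor $\mathrm{e}$ arising from a Stirling-type optimization of $(k\epsilon/\sigma)^k$; an additional subtlety is showing that the anti-Hamiltonian part of $X_m$ (and not merely $X_m$ itself) is exp-small, which requires a careful use of exact symplecticity rather than just symplecticity.
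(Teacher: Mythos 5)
Your plan contains two genuine gaps, both traceable to the decision to work directly with the single map $f$ rather than embedding it (as the paper does) into the one-parameter family $f_\mu$ defined by the generating function $G_\mu(P,q)=Pq+\mu S(P,q)$.

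First, the dimension factor from the phase-space Cauchy estimate spoils the rate. Your inductive bound $\|\Delta_{k+1}\|\le (2d\epsilon/\sigma)\|\Delta_k\|$ gives a geometric ratio $q=2d\epsilon/\sigma$, and with $\sigma$ bounded above by $\delta/(2(m+d+1))$ (needed so $D^{(m+d+1)}\supset D_1$) you get $q\gtrsim d\cdot(m+d+1)\epsilon/\delta$. Setting $m\approx\delta/(6\mathrm e\epsilon)$ makes this $q\gtrsim d/(3\mathrm e)$, which exceeds $1$ already for moderate $d$ and certainly is not $\le 1/\mathrm e$; to keep $q$ small you must take $m$ smaller by a factor of $d$, degrading the exponent to $\exp(-c\,\delta/(d\,\epsilon))$. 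The theorem asserts the rate $\exp(-\delta/(6\mathrm e\epsilon))$ with the $d$-dependence confined to the prefactor $\mathrm e^{d+1}$. The paper achieves this by bounding $(\I-\Tf_{f_\mu})^k\xi$ by the \emph{binomial} $2^{k-1}|\mu|\epsilon$ (dimension-free), observing that this quantity has valuation $\ge k$ in $\mu$, and then applying the maximum modulus principle in the scalar complex variable $\mu$; all the phase-space Cauchy estimating is replaced by one-dimensional complex analysis, and the dimension only enters via the IFT radius $\mu_k=\delta/(2\epsilon(k+d))$, i.e. additively, not multiplicatively.

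Second, the proposed Gronwall comparison between $\Phi^1_{X_m}$ and ``$\Phi^1_{G_\infty}=f$'' is not well-posed: $G_\infty$ is a formal vector field whose coefficient series is generically divergent, so $\Phi^1_{G_\infty}$ does not exist as an analytic object and there is no flow to run Gronwall against. The bounds on $\Delta_k$ alone do not tell you that the tail of the formal series is small in any summable sense. The paper instead uses Lemma~\ref{mjet1} to assert that $\Phi_{X_{m,\mu}}-f_\mu$ vanishes to order $m+1$ in $\mu$, establishes an a priori bound $\|\Phi_{X_{m,\mu}}-f_\mu\|_{D_0}\le 3\epsilon|\mu|$ on the disk $|\mu|\le\mu_m/3$, and then invokes the MMP in $\mu$ to get the $O(\mu^{m+1})$ bound at $\mu=1$ with explicit constants. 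The same remark applies to your definition of $\hat X_m$ as ``the order-$m$ truncation of the formal Hamiltonian'': without the auxiliary parameter $\mu$ there is no series in which to truncate; the paper defines $\hat X_{m,\mu}$ as the degree-$m$ Taylor polynomial of $X_{m,\mu}$ in $\mu$, proves it is Hamiltonian via the loop-action argument in Appendix~\ref{appendix:Hperiodic}, and bounds $\hat X_m - X_m$ by a Taylor-remainder estimate in $\mu$, again relying on the disk of analyticity in $\mu$. The auxiliary complex deformation parameter is the crucial idea your proposal is missing; it is what turns the problem into a one-variable complex-analysis exercise and yields dimension-independent exponential rates.
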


\begin{remark} \label{remark_cota}
We also prove the following statements about interpolating vector fields. Under the assumption of the theorem
for every $m$ such that
\[
1\le m < \frac{\delta}{6 \epsilon}-d
\]
the following inequalities hold: $\|X_m\|_{D_1}\le 2\epsilon$, $\|\hat X_m\|_{D_1}\le 4\epsilon$,
\begin{align*}
\|\Phi_{X_m}-f \|_{D_0} &\le 3 C_m^{m} \epsilon^{m+1} ,
\\
\|\Phi_{\hat X_m}-f \|_{D_0} &\le 5 C_m^{m} \epsilon^{m+1},
\\
\|\hat X_{m}- X_{m}\|_{D_1}
&\le 
4C_m^{m}\epsilon^{m+1},
\end{align*}
where
\(
C_m= \frac{6(m+d)}{\delta}.
\)
These bounds show that $X_m$ provides an  embedding of the map into a flow with $O(\epsilon^{m+1})$ error. The exponential bound is obtained by choosing $m$ to minimize the
error bound. The best approximation of the map by an interpolating flow 
is achieved when $m\approx \delta/6\mathrm e\epsilon$.
This step is  possible due to the explicit control of the constants in 
the error bounds.
\end{remark}

\begin{proof}
We consider the map 
 $f$ as a member of a 
family of symplectic maps $f_\mu$ defined implicitly by the generating function 
$$
G_\mu(P,q) = Pq+\mu S(P,q)
$$
where $\mu$ is a complex parameter.  When $\mu=1$ the map $f_\mu$ coincides with~$f$. When $\mu=0$ the map $f_\mu$ is the identity. Therefore this family interpolates between
$f$ and the identity map  $\xi:(p,q)\mapsto (p,q)$.
Obviously the  function $G_\mu$ is analytic in the same domain $D$ as the function $S$. 

First we are going to prove that if $x_0\in D_1$, $k\in\N$ and
\[
|\mu|\le \mu_k=\frac{\delta}{2\epsilon(k+d)},
\]
then 
 $x_k=f_\mu^k(x_0)\in D$ and 
\(
\bigl|x_k-x_{k-1}\bigr|\le |\mu| \epsilon.
\)
Indeed, let $x_k=(p_k,q_k)$, the trajectory is defined by 
 the system
\begin{equation}\label{Eq:orbit}
\left\{
\begin{aligned}
p_{k-1} &= p_k + \mu \frac{\partial{S}}{\partial q}(p_k,q_{k-1}),\\
q_k &= q_{k-1} + \mu \frac{\partial{S}}{\partial P}(p_k,q_{k-1}).
\end{aligned}
\right.
\end{equation}
In order to find $p_k$ we need to solve the first equation. Then
 we substitute the solution into the second one. 
The  Implicit Function Theorem~\ref{Thm:IFT}
implies that the system with $k=1$ has a solution  $ (p_1,q_1)\in D$.
We continue with the help of finite induction in $k$. 
Suppose that $k\in\N$ and the first $k-1$ iterates of $x_0$ belong to $D$
provided $|\mu|\le \mu_{k-1}$.
Then the system implies that 
\[
|x_{k-1}-x_0|\le \sum_{j=1}^{k-1}|x_j-x_{j-1}|\le (k-1)|\mu|\epsilon.
\]
Let $
r_{k-1}=\frac12\delta -(k-1)|\mu|\epsilon
$.
Since  $x_0\in D_1$, the set $D$ contains the $\frac{\delta}{2}$-neighbourhood of 
$x_0$ and, consequently, the  ball $B_{r_{k-1}}(x_{k-1})\subset D$.
Taking into account the definition of $\mu_k$ we get that for  $|\mu|\le \mu_k$
\[
r_{k-1}=\tfrac12\delta -(k-1)|\mu|\epsilon=(k+d)\mu_k\epsilon-(k-1)|\mu|\epsilon
\ge (d+1)|\mu|\epsilon.
\]
We see that the assumptions of the 
 Implicit Function Theorem~\ref{Thm:IFT}
are satisfied by the first line of the system and consequently 
it defines $p_k$ as a function of $(p_{k-1},q_{k-1})$. It is not too difficult to 
check that  $x_k=(p_k,q_k)\in  D$.

\medskip

Now we can study interpolating vector fields for the map $f_\mu$.
First we are to find  upper bounds for  the finite differences. We
introduce the following notation: for a function $g$ let $\Tf_f(g)=g\circ f$ and $\I(g)=g$.
 We note that
$$
	(\I-\Tf_{f_\mu})^k\xi
	=(\I-\Tf_{f_\mu})^{k-1}(\xi-f_\mu)
	=
	\sum_{j=0}^{k-1}
	\binom{k-1}{j}
	(-1)^j(f_\mu^j-f_\mu^{j+1})
$$
(recall that $\xi$ stands for the identity map) and consequently
\begin{equation} \label{bound_D_k}
	\left\|(\I-\Tf_{f_\mu})^k\xi\right\|_{D_1}
	\le 
	\sum_{j=0}^{k-1} \binom{k-1}{j} \left\|f_\mu^j-f_\mu^{j+1}\right\|_{D_1}
	\le  
	2^{k-1}|\mu|\epsilon.
\end{equation}
Next we recall that the operator $\Tf_{f_\mu}-\I$ increases valuation in $\mu$,  consequently 
$\val_{\mu}((\I-\Tf_{f_\mu})^k\xi)\ge k$. 
Applying the MMP%
\footnote{The degree of the first non-zero monomial of the Taylor expansion in a variable
 $\zeta$ defines a valuation, that will be denoted by $\val_\zeta$, of the ring
 of formal series $\mathbb{C}[[\zeta]]$.  We will use the following simple statement of complex analysis:
 if $g$ is analytic in $\{\zeta \in \mathbb{C}, |\zeta|\leq \zeta_0\}$, $\zeta_0>0$, and $\val_\zeta(g)\geq k$ then it follows from the maximum modulus principle (MMP) that 
 $|g(\zeta)|\leq (|\zeta|/\zeta_0)^{k} \max_{|\zeta|=\zeta_0 } |g(\zeta)|$. }
for each $x \in D_1$ fixed, we obtain 
$$
\left\|(\I-\Tf_{f_\mu})^k\xi\right\|_{D_1}
\le
\frac{|\mu|^k}{\mu_k^k}\sup_{|\mu|=\mu_k}\left\|
(\I-\Tf_{f_\mu})^k\xi
\right\|_{D_1}
\le
 \frac{|\mu|^k2^{k-1}\epsilon}{\mu_k^{k-1}} . 
$$
Now we let $m \geq 1$ and consider the interpolating vector field \eqref{Eq:interpol_VF} written in the form
$$
X_{m,\mu}=-\sum_{k=1}^m\frac{1}{k}(\I-\Tf_{f_\mu})^k\xi.
$$
If  $|\mu|\le \mu_m$, it
is analytic in $D_1$ and admits the following upper bound 
\[
\|X_{m,\mu}\|_{D_1}\le\sum_{k=1}^m \frac1{k} \left\|(\I-\Tf_{f_\mu})^k\xi\right\|_{D_1}
\le \epsilon |\mu| \sum_{k=1}^m \frac1k\left( \frac{2|\mu|}{\mu_k} \right)^{k-1}.
\]
Using that $\mu_k\ge\mu_m$ for $k\le m$, we get that 
\begin{equation}
\|X_{m,\mu}\|_{D_1} 
\le \epsilon |\mu| \sum_{k=1}^m \frac1{k}
\left(\frac{2}{3}\right)^{k-1}
<\epsilon |\mu|\,\frac{3}{2}\log 3< 2\epsilon|\mu|
\label{norD1opt2}
\end{equation} 
for
$|\mu| \leq \mu_m/3$. 
If $\mu_m\ge3$, the domain of validity of the upper bound 
includes $\mu=1$. 
Since $X_m = X_{m,1}$ we conclude that 
\[
\|X_{m}\|_{D_1}< 2 \epsilon.
\]
Now we consider 
$\Phi_{X_{m,\mu}}$, 
the time-one map  of the vector field $X_{m,\mu}$.
Equation \eqref{norD1opt2} implies that
\[
 \|X_{m,\mu}\|_{D_1}<\frac{2\epsilon\mu_m}{3}=\frac{\delta}{3(m+d)}\le\frac{\delta}{6}.
\]
Then the orbit of every point in $D_0$ remains in $D_1$ during one unit of time and
$$
\|\Phi_{X_{m,\mu}}-\xi\|_{D_0}\le \|X_{m,\mu}\|_{D_1}.
$$
Then
$$
\|\Phi_{X_{m,\mu}}-f_\mu \|_{D_0}\le \|\Phi_{X_{m,\mu}}-\xi\|_{D_0}+\|\xi-f_\mu \|_{D_0} \le \|X_{m,\mu}\|_{D_1}+\epsilon|\mu| =3\epsilon|\mu|.
$$
Lemma~\ref{mjet1} states that the Taylor expansion in $\mu$ of $\Phi_{X_{m,\mu}}$ matches the Taylor expansion of
$f_\mu$ up to the order $m$.
Then MMP implies that
\begin{equation}\label{Eq:Phi-f_mu}
\|\Phi_{X_{m,\mu}}-f_\mu \|_{D_0}  
\le   \epsilon\mu_m\, \frac{ 3^{m+1}|\mu|^{m+1}}{\mu_m^{m+1}} .
\end{equation}
Substituting $\mu=1$ we obtain
\begin{equation}\label{Eq:Phi-f}
    \|\Phi_{X_m}-f \|_{D_0} 
  \le   3\epsilon\, \frac{ 3^{m}}{\mu_m^{m}} 
  =
    3\epsilon  \left( \frac{6(m+d)\epsilon}{\delta}\right)^{m} .
\end{equation}
The right hand side depends on $m$ and takes the smallest values 
somewhere near  $m=\left\lfloor M_\epsilon \right\rfloor$ where
\begin{equation}\label{Eq:mepsilon}
   M_\epsilon=\frac{\delta}{6 \mathrm e\epsilon}-d.
\end{equation} 
We note that for $m\le M_\epsilon$ we get that
\[
\mu_m=\frac{\delta}{2\epsilon(m+d)}\ge
\frac{\delta}{2\epsilon(M_\epsilon+d)}= 3\mathrm e>6
\]
and consequently the inequality \eqref{Eq:Phi-f} holds for all these values of $m$.
We notice that for $m=\left\lfloor M_\epsilon \right\rfloor$ we have
$$
\frac{6(m+d) \epsilon}{\delta}\le \mathrm e^{-1}
$$
and consequently
$$
\|\Phi_{X_m}-f \|_{D_0} 
\le 3 \epsilon \, \mathrm e^{-m} 
\le 3\epsilon\, \mathrm e^{1+d} \exp
\left(-
\frac{\delta}{6\mathrm e \epsilon}
\right).
$$
The interpolating vector fields $X_{m,\mu}$  are rarely Hamiltonian. 
On the other hand, the formal interpolating vector field is Hamiltonian.
Although this property is known, we give a simple
proof in Appendix~\ref{appendix:Hperiodic}.
Then Lemma~\ref{mjet1} implies that $\hat X_{\mu,m}$, the Taylor polynomial in $\mu$ of degree $m$ for $X_{m,\mu}$, is
Hamiltonian.
For example, the interpolating vector field of the first order
is given by 
\[
X_{1,\mu}(p,q)=f_\mu(p,q)-(p,q).
\]
In general, there is no reason for this vector field
to be Hamiltonian. On the other hand,  its Taylor polynomial of degree one,
\[
\hat X_{1,\mu}=\left.\frac{\partial X_{1,\mu}}{\partial\mu}\right|_{\mu=0}\mu
=\left.\frac{\partial f_{\mu}}{\partial\mu}\right|_{\mu=0}\mu,
\]
is Hamiltonian. In order to find the corresponding Hamiltonian function we recall that
the map $f_\mu:(p,q)\mapsto (p_1,q_1)$ is defined implicitly by the system
\eqref{Eq:orbit} with $k=1$ (we assume $(p_0,q_0)=(p,q)$ are independent of $\mu$).
Differentiating the system with respect to $\mu$
at $\mu=0$ and  using that $p_1=q$ and $q_1=q$ for $\mu=0$,
we get
\[
\left\{
\begin{aligned}
0 &= \frac{\partial p_1}{\partial\mu} + \frac{\partial{S}}{\partial q}(p,q),\\
\frac{\partial q_1}{\partial \mu} &= 0 + \frac{\partial{S}}{\partial p}(p,q).
\end{aligned}
\right.
\]
We conclude that
\begin{equation} \label{H1}
\hat X_{1,\mu}(p,q)= \mu \left( 
- \frac{\partial{S}}{\partial q}(p,q),\frac{\partial{S}}{\partial p}(p,q)\right).
\end{equation}
We see that the vector field $\hat X_{1,\mu}$ is Hamiltonian
with the Hamiltonian function $H_{1,\mu}=\mu S$.

In order to estimate $\hat{X}_{m,\mu}$ for $m\le M_\epsilon$,
we notice that equation  \eqref{norD1opt2} implies
that $\|X_{m,\mu}\|_{D_1} \le 2 \epsilon \mu_m/3$ 
for  $|\mu| \leq  \mu_m/3$. Then for $k\le m$
\[ \frac{1}{k!} \left\|\partial_\mu^k X_{m}\bigl|_{\mu=0}\right\|_{D_1}
\le 2 \epsilon \left(\frac{3}{\mu_m}\right)^{k-1}
\]
and
\[
\bigl\|\hat X_{m,\mu}\bigr\|_{D_1}\le \sum_{k=1}^m \frac{1}{k!} \left\|\partial_\mu^k X_{m,\mu}\bigl|_{\mu=0}\right\|_{D_1} |\mu|^k
\le
2 \epsilon |\mu| \sum_{k=1}^m  \left(\frac{3 |\mu|}{\mu_m}\right)^{k-1}.
\]
For $|\mu| \le \mu_m/6$ we get
\begin{equation} \label{norD1optH2}
 \|\hat X_{m,\mu}\|_{D_1} \le  2 \epsilon |\mu| \sum_{k=1}^{\infty} \left( \frac12\right)^{k-1} \le 4 \epsilon |\mu|.
\end{equation}
Since $ \mu_m>6$
 we can substitute $\mu=1$
 to obtain 
\[ \|\hat X_m\|_{D_1} \le 4 \epsilon. 
\]
Then we repeat the previous
arguments using $\hat X_{m,\mu}$ instead of $
X_{m,\mu}$ and 
  the upper bound
\eqref{norD1optH2} instead of \eqref{norD1opt2}. 
The equation \eqref{norD1optH2} implies that  $ \|\hat X_{m,\mu}\|_{D_1} \le 2\epsilon\mu_m/3=\frac{\delta}{3(m+d)}<\frac{\delta}{6}$ for all  $|\mu|\le \mu_m/6$.
Then repeating the previous arguments we get
$\|\Phi_{\hat{X}_{m,\mu}}-f_\mu \|_{D_0}\le 5\epsilon|\mu|$. 
Then using the MMP we get
$$
\|\Phi_{\hat{X}_{m,\mu}}-f_\mu \|_{D_0}  
\le   \frac{5\epsilon\mu_m}{6}\, \frac{ 6^{m+1}|\mu|^{m+1}}{\mu_m^{m+1}} .
$$
Substituting $\mu=1$ we get
$$
\|\Phi_{\hat{X}_m}-f \|_{D_0}  
\le   5 \epsilon \left( \frac{ 6(m+d) \epsilon}{\delta} \right)^{m}.
$$
In particular, for  
$m=\left\lfloor M_\epsilon\right\rfloor $ 
\[
\|\Phi_{\hat X_{\mathrm{m}}}-f \|_{D_0}\le 
5\;\mathrm e^{d+1}\epsilon \exp\left(-\delta /(6 \mathrm e\,\epsilon)\right).
\]
Since $\hat X_{m,\mu}$ is the Taylor polynomial of $X_{m,\mu}$ of order $m$ we can use the standard bound
for the remainder (the radius of convergence is $\mu_m/3$, the bound is for $\mu=1$, $\mu_m>6$):
\[
\left\|\hat X_{m}- X_{m}\right\|_{D_1}\le 
\frac{(2\epsilon\mu_m/3  )(3/\mu_m)^{m+1}}{1-3/\mu_m}
\le 4\epsilon  (3/\mu_m)^{m}
=4\epsilon\left( \frac
{6\epsilon(m+d)}{\delta}\right) ^{m}.
\]
Substituting
 $m=\left\lfloor M_\epsilon \right\rfloor$
we get 
\begin{equation}\label{Eq:hatX_m-X_m}
\|\hat X_{m}- X_{m}\|_{D_1}\le 
4\epsilon\,\mathrm e^{1+d}\exp\left(-\frac{\delta}{6\mathrm e \epsilon}\right).  
\end{equation}
We see that the interpolating vector field $X_m$ is exponentially
close to a Hamiltonian one.

In order to complete the proof we have 
 to show that the vector fields $\hat X_m=\hat X_{m,1}$
 are close to $S$ for all $m\le M_\epsilon$.
 Using the upper bounds for the derivatives we get
\[
\|\hat X_{m}- \hat X_{1}\|_{D_1}\le 
\sum_{k=2}^m
\frac{1}{k!} \left\|\partial_\mu^k X_{m}\bigl|_{\mu=0}\right\|_{D_1}
\le 
2\epsilon\sum_{k=2}^m
\left(\frac{3}{\mu_k}\right)^{k-1}.
\]
It is not too difficult to check that the sequence 
$\left(\frac{3}{\mu_k}\right)^{k-1}$
is monotone decreasing for $k\le m\le M_\epsilon$. Therefore
\[
\|\hat X_{m}- \hat X_{1}\|_{D_1}\le 
\frac{6\epsilon}{\mu_2}
+2\epsilon\sum_{k=3}^m
\left(\frac{3}{\mu_k}\right)^{k-1}
\le 
\frac{6\epsilon}{\mu_2}+\frac{18\epsilon \,M_\epsilon}{\mu_3^2}
.
\]
Recalling the definitions of $\mu_k$ we get
\[
\|\hat X_{m}- \hat X_{1}\|_{D_1}\le 
\frac{12\epsilon^2}{\delta}(d+2)
+\frac{12\epsilon^2  }{ \mathrm e \delta }(d+3)^2
\le 
\frac{17\epsilon^2  }{ \delta }(d+3)^2
.
\]
We get the desired estimate as $\hat X_1=\mathrm J \nabla S$.
\end{proof}

\section{Interpolating flow near a fully resonant torus \label{Sect:Nekhoroshev}}

This section contains the proof of Theorem~\ref{Thm:longstab}.
We recall that $I_*\in B_R$
corresponds to a fully resonant torus, i.e., $n\omega(I_*)\in\Z^d$ for some natural
$n<N_\varepsilon$. For the rest of this section we assume that
\[
\gamma_0^2=\frac{18 d\|a\|}{\nu}
\qquad\text{and}\qquad
r_0^2={\frac{\nu}{6 d\|h_0''\|}}\,.
\]
We also fix $\gamma\ge\gamma_0$. We will reduce $\varepsilon_0$
when necessary.

\subsection{Exponentially accurate interpolation \label{Se:intrepol_at_Torus}} 

Theorem~\ref{Thm:longstab} establishes estimates for stability times for real initial conditions. On the other hand, in order to use Theorem~\ref{Thm:alaNeishtadt} we need a bound of the map $f_\varepsilon^n$
in a complex neighbourhood of its real domain.
As a first step we check that the map satisfies
the assumptions of  Theorem~\ref{Thm:alaNeishtadt}  in 
\begin{equation}\label{Eq:DI*}
\mathcal D(I_*)=\left\{\,
(I,\varphi)\in\mathbb C^{2d}: |I-I_*|<2\rho_n,|\operatorname{Im}(\varphi)|<r/2\,\right\},
\end{equation}
a complex neighbourhood of the domain $\mathcal D_0(I_*)$ defined  in \eqref{Eq:D(I*)}.
Let $(I_0,\varphi_0)\in \mathcal D(I_*)$.
Applying Lemma~\ref{Le:apriori} recursively to check that the previous iterates do not leave the domain $\mathcal D_F$,
we conclude that
\[
\begin{aligned}
&|I_k-I_*|\le |I_0-I_*|+C_1k \varepsilon\le 2\rho_n+C_1k\varepsilon<\sigma,\\
&|\operatorname{Im}(\varphi_k-\varphi_0- k \omega(I_0))|
\le C_2 k^2 \varepsilon< \frac{r}{4},
\end{aligned}
\] 
while $k$ is not too large.
Then
\[
\begin{split}
|\operatorname{Im}(\varphi_k-\varphi_0)|
\le 
\frac r4 + k |\operatorname{Im}( \omega(I_0))|
\le
\frac r4 + k \|\omega'\|\,|\operatorname{Im}(I_0)|
\le \frac{r}4+C_3k\rho_n<\frac r2.    
\end{split}
\]
Here we  use the constant $C_3=2 \|\omega'\|$
and assume that
\begin{equation}\label{Eq:C1-3}
C_1 k\varepsilon <\frac\sigma2,\qquad
C_2 k^2 \varepsilon<\frac r4,\qquad C_3k\rho_n<\frac r4,
\qquad 4\rho_n<\sigma.
\end{equation}

Recalling our choice of $\rho_n=\rho_\varepsilon/n$, $\rho_\varepsilon=\gamma\varepsilon^{1/2(d+1)}$ and $n<N_\varepsilon=\varepsilon^{-d/2(d+1)}$, see \eqref{Eq:rho_eps} and \eqref{Nmax_nearid},
it is easy to check that the inequalities \eqref{Eq:C1-3} are satisfied
for $k \leq n$
provided $\varepsilon<\varepsilon_0$ with a sufficiently small $\varepsilon_0$
(independent of $n$).
Then  the first $n$ iterates $(I_n,\varphi_n)=F_\varepsilon^n(I_0,\varphi_0)$
are well defined for initial conditions in $\mathcal D(I_*)$.
Since $|\omega(I_0)-\omega(I_*)|\le \|\omega'\| 2\rho_n$
we get from Lemma~\ref{Le:apriori}  that
\begin{equation} \label{n_it_estimates}
    |I_n-I_0|\le C_1 n\varepsilon,
\qquad
|\varphi_n-\varphi_0-n\omega(I_*)|
\le C_2 n^2\varepsilon + C_3 n\rho_n.
\end{equation}
To study the dynamics in $\mathcal D(I_*)$ we 
introduce  translated and scaled actions $J$
 with the help of the equality
\begin{equation} \label{scaling}
I=I_*+\rho_{n} J.
\end{equation}
Let $\hat{f}_\varepsilon^n$ denote the map \eqref{mapFn}  expressed in the new coordinates. 
It can be written in the form
$\hat f_{\epsilon}^n:(J,\varphi)\mapsto (\bar J,\bar \varphi)$,
\begin{equation}\label{Eq:hatfepsn}
\left\{
\begin{aligned}
	\bar J&=J+\rho_n^{-1}\varepsilon\sum_{k=0}^{n-1} a(I_{k},\varphi_{k}),
 \\
\bar \varphi&=\varphi+\sum_{k=0}^{n-1}(\omega( I_{k})-\omega(I_*))+\varepsilon\sum_{k=0}^{n-1} b(I_{k},\varphi_{k}),
\end{aligned}	
\right.
\end{equation}
and $(I_k,\varphi_k)=F_\varepsilon^k(I_0,\varphi_0)$  denote
iterates of $(I_0,\varphi_0)=(I_*+\rho_n J,\varphi)$ under the original map \eqref{Eq:feps}.
The map $\hat f_\varepsilon^n$  is $\epsilon_n$-close
to the identity on $\mathcal D(I_*)$
where
\[
\begin{split}
\epsilon_n&=
\sup_{|J|<2,|\operatorname{Im}(\varphi)|<r/2}
\max\{\,|\bar J-J|,|\bar \varphi-\varphi|\,\}
\\
&=\sup_{(I_0,\varphi_0)\in \mathcal D(I_*)}
\max\{\,\rho_n^{-1}|I_n-I_0|,|\varphi_n-\varphi_0-n\omega(I_*)|\,\}.
\end{split}
\]
Then, the estimates \eqref{n_it_estimates} show that
\begin{equation} \label{dtoId}
\epsilon_n\le 
\max\left\{ C_1
\rho_n^{-1}n\varepsilon,\,  C_3 n\rho_n+C_2 n^2\varepsilon
\right\} =
n\rho_n\max\left\{ C_1
\frac{\varepsilon}{\rho_n^2},\,  C_3 +C_2 \frac{n\varepsilon}{\rho_n}
\right\}
.
\end{equation}

Since
\[
\frac{\varepsilon}{\rho_n^2}=\frac{n ^2\varepsilon}{\rho_\varepsilon^2}
=
\frac{n^2 \varepsilon^{d/(d+1)}}{\gamma^2}<
\frac{N_\varepsilon^2\varepsilon^{d/(d+1)}}{\gamma^2}=\frac{1}{\gamma^2}
\]
and
\[
\frac{n\varepsilon}{\rho_n}=\frac{n^2\varepsilon}{\rho_\epsilon}<
\frac{\rho_\varepsilon}{\gamma^2}=\frac{\varepsilon^{1/2(d+1)}}{\gamma},
\]
there is a positive constant  $C_4$ such that  
\begin{equation} \label{dtoId_Lochack}
\epsilon_n\le C_4 n\rho_n
=
\gamma C_4 \varepsilon^{1/2(d+1)}
.
\end{equation}

 The Implicit Function Theorem~\ref{Thm:IFT} can be applied to the first component of
 \eqref{Eq:hatfepsn}
 to show that $J$ is an analytic function of $\bar J$ and $\varphi$ on the set
\[
\hat D=\left\{\,(J,\varphi)\in\mathbb C^d:|J|\le 2 -(d+1)\epsilon_n,\;
|\operatorname{Im}(\varphi)|<r/2\,
\right\}.
\]
Then the expression for $J$
can be substituted into the second component of \eqref{Eq:hatfepsn}
to express $(J,\bar \varphi)$ as a function of $(\bar J,\varphi)$. Since the map
is symplectic, then according to Appendix~\ref{Se:genfun} there is a function
$S_n$ such that
\begin{equation}\label{Eq:Sn_map}
\bar J-J=-\frac{\partial S_n}{\partial \varphi}(\bar J,\varphi),
\qquad
\bar \varphi-\varphi=
\frac{\partial S_n}{\partial \bar J}(\bar J,\varphi).
\end{equation}
The definition of $\epsilon_n$ implies 
\[
\|\nabla S_n\|_{\hat D}\le \epsilon_n.
\]
Let
\begin{equation} \label{delta_gamma0}
\delta=\tfrac12 \min\{1,r\}\qquad\text{and}\qquad
c_3=\frac{\delta}{C_4\gamma 6\mathrm e}.
\end{equation}
If $\epsilon_n\le 1/2(d+1)$, then $\hat D$ contains a complex $\delta$-neighbourhood
of the real set
\[
\hat D_0=B(0,1)\times \mathbb R^d.
\] If 
 $\epsilon_n\le \delta/6\mathrm e(d+1)$, then the map $\hat f_\varepsilon^n$
 satisfies the assumptions of Theorem~\ref{Thm:alaNeishtadt}
 which states that there is $m=m(\epsilon_n)\sim\epsilon_n^{-1}$ such that the
 time-one map of the  Hamiltonian vector  field $\hat X_m$ approximates $\hat
 f_\varepsilon^n$ with exponential accuracy:
\begin{equation}\label{Eq:exp_near_res}
\left\|\hat f_\varepsilon^n-\Phi_{\hat X_m}\right\|_{B(0,1)\times\mathbb R^d }
\le 5\mathrm e^{d+1}\epsilon_n \exp\left(-c_3 \, \varepsilon^{-1/2(d+1)}\right).    
\end{equation}
According to the theorem 
 $\hat X_m$
is close to the vector field  with the Hamiltonian function $S_n$.
We will analyse the Hamiltonian function of $\hat X_m$ in the next subsection.

\begin{remark} \label{Rk_scaling}
In this section we use the linear scaling \eqref{scaling} of the original action
coordinate $I$ by the factor $\rho_{n}$. This scaling is useful to enable a
direct application of Theorem~\ref{Thm:alaNeishtadt}. However, we can compute
$X_m$ using the iterates  of $f_\varepsilon^n$ in the original coordinates
$(I,\varphi)$ as the interpolation procedure commutes with linear changes of
variables.
\end{remark}

\subsection{Long term stability of actions} \label{Se:stability}

In the previous section we have established that $f_\varepsilon^n$, 
the lift of the map $F_\varepsilon^n$, is exponentially close to the time-one
map of the autonomous Hamiltonian vector field $\hat{X}_m$ in a small
neighbourhood of the fully resonant torus. In this section we will derive an
approximation for the corresponding Hamiltonian function $H_m$ and use its
properties to establish which  trajectories of the map are trapped inside this
neighbourhood for exponentially long times. 

According to
\eqref{Eq:XmLeadingOrder}, the interpolating Hamiltonian $H_m$ is close to
$S_n$, the generating function of the map $\hat
f_\varepsilon^n:(J,\varphi)\mapsto (\bar J,\bar\varphi )$. 
As a first step we derive the leading order approximation for $S_n$. Comparing
\eqref{Eq:hatfepsn} with \eqref{Eq:Sn_map} we conclude that $S_n$ is a solution
of the system of equations
\[
\begin{aligned}
\frac{\partial S_n}{\partial \varphi}(\bar J,\varphi)
&=
-\rho_n^{-1}\varepsilon\sum_{k=0}^{n-1}a(I_k,\varphi_k),
\\
\frac{\partial S_n}{\partial \bar J}(\bar J,\varphi)
&=n(\omega(I_n)-\omega(I_*))+
\sum_{k=0}^{n-1} (\omega( I_{k})-\omega(I_n))+\varepsilon\sum_{k=0}^{n-1} b(I_{k},\varphi_{k}).
\end{aligned}
\]
Note that the right hand side is expressed in terms of $(I_k,\varphi_k)=F_\varepsilon
^k(I_* + \rho_n J,\varphi)$. Since the value of $J$ can be expressed in terms of
$(\bar J,\varphi)$, $(I_k,\varphi_k)$ can also be expressed in terms of $(\bar
J,\varphi)$. In particular $I_n=I_*+\rho_n \bar J$. The symplecticity of $\hat
f_\varepsilon^n$ implies existence of a solution. We write it in the form
\begin{equation}\label{Eq:Snbounds}
S_n(\bar J,\varphi)
=h_n(\bar J)
+w_n(\bar J,\varphi).
\end{equation} 
The first term 
\[
h_n(\bar J) =n\rho_n^{-1}\left(h_0(I_*+\rho_n \bar  J)-h_0(I_*)-\rho_n \, 
\omega(I_*) \cdot \bar J
\right)
\]
represents the part independent of $\varphi$ and
comes from the explicit calculation performed with the help of the equality $\omega(I)=h_0'(I)$. The second term is expressed as an integral
\[
w_n(\bar J,\varphi)=
\int_{(0,0)}^{(\bar J,\varphi)}\sum_{l=1}^d (
v_l \; d\bar J_l-u_l \; d\varphi_l )
\]
where the index $l$ refers to components of the vectors,
the integral does not depend on the path connecting the end
points and 
\begin{equation} \label{uv}
\left.
\begin{aligned}
u(\bar J,\varphi) &=
  \rho_n^{-1}\varepsilon\sum_{k=0}^{n-1}a(I_k,\varphi_k), \\
v(\bar J,\varphi)&=
  \sum_{k=0}^{n-1} (\omega( I_{k})-\omega(I_n))+\varepsilon\sum_{k=0}^{n-1} b(I_{k},\varphi_{k}).
\end{aligned} \right.
\end{equation}
The equation~\eqref{Eq:Snbounds} suggests that
$S_n$ has the form of an integrable part plus a perturbative term, denoted by $h_n$ and $w_n$ respectively.
The integrable part is approximately quadratic. Indeed, 
$h_n'(0)=0$ and the strong convexity and smoothness of $h_0$ imply that 
\begin{equation} \label{convexityhn}
 \frac{\nu\,n\rho_n }{2} |\bar J|^2
\le 
h_n(\bar J)
\le \frac{\nu_2n\rho_n }{2} |\bar J|^2.
\end{equation}
where $\nu_2= d\|h_0''\|$.
In order to see that $h_n$ dominates 
$w_n$ outside 
a small neighbourhood of the origin, we look for an explicit bound for $w_n$
paying attention to the uniformity for all resonances.
The triangle inequality and \eqref{uv} imply that
$$
|u|\le C_1 n\varepsilon\rho_n^{-1}.
$$
Then using arguments of Lemma~\ref{Le:apriori} we get
\[
\begin{aligned}
|v|&\le \sum_{k=0}^{n-1}\|\omega'\|\,|I_n-I_k|
+\varepsilon n \|b\|\le
\frac{\varepsilon n^2}{2}\|\omega'\|\,\|a\|+\varepsilon n \|b\|
\le C_2 n^2 \varepsilon .
\end{aligned}
\]
Taking into account the periodicity arguments it is sufficient to  consider $w_n$ on the set $\hat D_0=B(0,1)\times[-1,1]^d$. 
Then 
\begin{equation} \label{wnbound}
\|w_n\|_{\hat D_0}\le 
dC_2 n^2\varepsilon+d C_1n\varepsilon\rho_n^{-1}.
\end{equation}
With our choice of  $\rho_n=\rho_\varepsilon n^{-1}$,
$\rho_\varepsilon=\gamma\varepsilon^{1/2(d+1)}$
and $n<N_\varepsilon=\varepsilon^{-d/2(d+1)}$, see \eqref{Eq:rho_eps} and \eqref{Nmax_nearid}, we get
\[
\begin{split}
\frac{\|w_n\|_{\hat D_0}}{n\rho_n}
&\le 
dC_2 n\varepsilon \rho_n^{-1}+d C_1\varepsilon\rho_n^{-2} 
\le
dC_2 \varepsilon^{ 1/2(d+1)}\gamma^{-1}+d C_1\gamma^{-2},
\end{split}
\]
where we used the bounds presented after equation~\eqref{dtoId}.
With our choice of $\gamma\ge \gamma_0$, the second term in the sum does not exceed $\nu/18$.
Decreasing $\varepsilon_0$ (if necessary) we get
\begin{equation} \label{cotawn}
\frac{\|w_n\|_{
\hat D_0}}{n\rho_n}
\le\frac{\nu}{9} .
\end{equation}
In this way we have got upper bounds for both terms in \eqref{Eq:Snbounds}.

According to Theorem~\ref{Thm:alaNeishtadt} the interpolating vector field
is Hamiltonian, $\hat X_m=\mathrm J\nabla H_m$, and the Hamiltonian
$H_m$ is close to the generating function $S_n$ due to the bound \eqref{Eq:XmLeadingOrder}.
The Hamiltonian $H_m$ can be obtained by integrating the vector field $\hat X_m=\mathrm J\nabla H_m$.
Since the map $\hat f_\varepsilon^n$ is periodic
in angles the vector field is also periodic. 
Moreover,  since the map is exact symplectic
and  in Theorem~\ref{Thm:alaNeishtadt}  $\hat X_m$ is obtained from a truncated expansion,
Appendix~\ref{Se:genfun} shows that 
the Hamiltonian is periodic.
Then we can restrict the integration to a fundamental domain in the angle variables
to get from \eqref{Eq:XmLeadingOrder} the inequality
\[
\|H_{m}-S_n\|_{
\hat D_0}\le 
\frac
{2c d\epsilon_n^2}{\delta} \le  \frac{2c d C_4^2n^2\rho_n^2}{\delta} = C_5 n^2\rho_n^2
\; .
\]
Decreasing $\varepsilon_0$ (if necessary) we get
 $C_5 n\rho_n= C_5\rho_\varepsilon<\frac{\nu}{18}$.
Then the equation \eqref{Eq:Snbounds} implies that
\[
\left\|H_{m}-
h_n\right\|_{
\hat D_0}
\le 
C_5 n^2\rho_n^2 +\|w_n\|_{
\hat D_0}< \frac\nu6 n\rho_n
\, .
\]

With the help of the bounds \eqref{convexityhn} 
we get that 
\begin{equation}\label{Eq:H_mBounds}
   \tfrac12  \nu |J|^{2}-\tfrac16 \nu
\le \frac{H_{m}(J,\varphi)}{n\rho_n}
\le 
\tfrac12 \nu_2 |J|^2+\tfrac16 \nu \, .
\end{equation} 
Suppose that at some point the  Hamiltonian $H_m(J,\varphi)< n\rho_nE_{\mathrm{max}}$ 
where
 \(E_{\mathrm{max}} = \tfrac 13\nu.\) The first inequality of \eqref{Eq:H_mBounds} implies that  
 \[
 \frac\nu2|J|^2< \frac{\nu}{3}+\frac\nu6=\frac\nu2 \, .
 \]
Since the Hamiltonian flow preserves $H_m$,
the whole trajectory of $(J,\varphi)$
is inside the  domain $|J|<1$.

Now let $E_0=\frac14\nu$.  Any point with $|J|<r_0$
belongs to  the set of initial conditions which
satisfy the inequality $H_m(J,\varphi)< n\rho_nE_0$. Indeed,
our choice of $r_0$ implies that
\[
\frac{H_{m}(J,\varphi)}{n\rho_n}
<
\frac{\nu_2}{2}r_0^2+\frac\nu6  = \frac{\nu}{4}.
\]
Unlike the Hamiltonian flow, the map does not
preserve the energy. Fortunately the change in the
energy after a single iterate is exponentially 
small:
\[
\begin{split}
M_\varepsilon&=\left\|H_m\circ \hat f_\varepsilon^n-H_m\right\|
_{
\hat D_0}=
\left\|H_m\circ \hat f_\varepsilon^n-H_m\circ \Phi_{\hat X_m}\right\|_{
\hat D_0}
\\&\le
\|H_m'\|_{
\hat D_1}\;\left\|\hat f_\varepsilon^n-\Phi_{\hat X_m}\right\|_{
\hat D_0}
= \|\hat X_m\| _{
\hat D_1}
\;\left\|\hat f_\varepsilon^n-\Phi_{\hat X_m}\right\|_{
\hat D_0}
\\&\le
20\epsilon_n^2\mathrm e^{d+1} \exp\left(-c_3 \varepsilon^{-1/2(d+1)}\right)
\end{split}
\]
where we use \eqref{Eq:exp_near_res} 
and the bound $\|\hat X_m\|_{\hat D_1}\le 4\epsilon_n$
of Theorem~\ref{Thm:alaNeishtadt}.
Here ${\hat D_1}$ is the $\frac\delta2$-neighbourhood of ${\hat D_0}$.
If we take an initial condition with $|J|<r_0$ 
then the initial energy is below $n\rho_n E_0$. 
We can be sure that the point remains inside the domain $|J|<1$ while the energy does not exceed 
$n\rho_n E_{\mathrm{max}}$. In this case  we can use the telescopic sum to see  that
\[
\begin{split}
H_m\circ \hat f_\varepsilon^{kn}(J,\varphi)-H_m(J,\varphi)&=
\sum_{j=1}^k
(H_m\circ \hat f_\varepsilon^{jn}(J,\varphi)
-H_m\circ \hat f_\varepsilon^{(j-1)n}(J,\varphi))
\\&
\le k M_\varepsilon.
\end{split}
\]
Then $H_m\circ \hat f_\varepsilon^{kn}(J,\varphi)< n\rho_nE_{\mathrm{max}}$ for all 
$k \le n\rho_n (E_{\mathrm{max}}-E_0)/M_\varepsilon$.
Consequently, the minimal number of iterates  of $F_\varepsilon$ needed to start
with an energy below $n\rho_nE_0$ and finish above $n\rho_nE_{\mathrm{max}}$ is larger than
\[
T_{\varepsilon}= \frac{n^2\rho_n\nu 
}{12 M_\varepsilon}
\ge
\frac{n^2\rho_n\nu}{240\epsilon_n^2\mathrm e^{d+1}}
\exp\left(c_3 \, \varepsilon^{-1/2(d+1)}\right)
\ge 
c_2 \exp\left(c_3 \, \varepsilon^{-1/2(d+1)}\right)
.
\]
We have proved that a trajectory 
with an initial condition $(I_0,\varphi_0)$
such that $|I_0-I_*|<r_0\rho_n$
has the property $|I_{kn}-I_*| < \rho_n$
for $0\le k n\le T_{\varepsilon}$.
We complete the proof of Theorem~\ref{Thm:longstab} by noting that between multiples of $n$
the changes in action variables are  controlled by Lemma~\ref{Le:apriori}
and do not exceed $C_1n\varepsilon$. Consequently $|I_k-I_*|<\rho_n$
for all $k<T_\varepsilon$.
This argument completes the proof of Theorem~\ref{Thm:longstab}.
\goodbreak

\section{Nucleus of a resonance \label{Se:Nucleus}}

Our proof of the exponential estimates for the stability times of the action variables
%is based on  covering 
uses a covering of
the phase space by $\rho_n$-neighbourhoods of unperturbed fully resonant tori. Each of these tori is characterised by its frequency $\omega_*$ such that $n\omega_*\in\Z^d$ for some  $n<N_\varepsilon=\varepsilon^{-d/2(d+1)}$.
Therefore a fully resonant torus of period $n$ is included into the analysis
when  $\varepsilon$ becomes smaller than $n^{-2(d+1)/d}$ and eventually every fully resonant torus is used. In this section we show that every fully resonant torus has a small neighbourhood, which we call a {\em nucleus of the resonance},
where  the stability times are much longer than in the Nekhoroshev theorem.
Moreover the difference in stability times grows as $\varepsilon$ decreases
due to the presence of the factor $\varepsilon^{-1/2}$ instead of $\varepsilon^{-1/2(d+1)}$
in the exponent.

For the purpose of this analysis it is  convenient to rewrite the map
$F_\varepsilon:(I,\varphi)\mapsto (\bar I,\bar\varphi)$ with the help of a generating function
\[
S(\bar I,\varphi)=\bar I\cdot\varphi + h_0(\bar I)+\varepsilon s (\bar I,\varphi)
\]
where the function $s$ depends periodically on the angles $\varphi$.
Then the map is defined implicitly by the system
\begin{equation}\label{Eq:feps_crossform}
\left\{
\begin{aligned}
	\bar I&=I-\varepsilon \frac{\partial s}{\partial\varphi}(\bar I,\varphi),\\
	\bar \varphi&=\varphi+\omega(\bar I)+\varepsilon \frac{\partial s}{\partial\bar I}(\bar I,\varphi) \pmod1,
\end{aligned}	
\right.
\end{equation}
where $\omega(\bar I)=h_0'(\bar I)$.
When $\varepsilon=0$, these equations can be easily solved explicitly.
On the other hand, the geometric arguments of 
Appendix~\ref{Se:genfun}  
and Implicit Function Theorem \ref{Thm:IFT} can be used to show that every quasi-integrable map
$F_\varepsilon$ can be represented in this form.
 The $n$-th iterate of the map takes the form
\begin{equation} \label{Eq:fepsn_gf}
\left\{
\begin{aligned}
	I_n&=I_0-\varepsilon \sum_{k=0}^{n-1}\partial _2s(I_{k+1},\varphi_k),\\
	\varphi_n&=\varphi_0+\sum_{k=1}^{n}h_0'(I_k)+\varepsilon\sum_{k=0}^{n-1} \partial_1 s(I_{k+1},\varphi_k) .
\end{aligned}	
\right.
\end{equation}
The subsequent analysis is motivated by the application of the standard scaling 
near the resonant torus $I=I_*$ with the scaled action $J$ defined by
the equation $I=I_*+\sqrt{\varepsilon}J$. In the scaled variables, the map
\[(\varphi,J)\mapsto (\bar\varphi,\bar J) = (\varphi_n-n\omega_*,(I_n-I_*)/\sqrt{\varepsilon})\] takes the form
\begin{equation} \label{Eq:fepsn_scaled}
\left\{
\begin{aligned}
	\bar J&=J-\varepsilon^{1/2} \sum_{k=0}^{n-1}\partial _2s(I_{k+1},\varphi_k),\\
	\bar \varphi&=\varphi+\sum_{k=1}^{n}(h_0'(I_k)-\omega_*)+\varepsilon\sum_{k=0}^{n-1} \partial_1 s(I_{k+1},\varphi_k) ,
\end{aligned}	
\right.
\end{equation}
where $(I_k,\varphi_k)$ denote the trajectory of the point
$(\varphi_0,I_0)=(\varphi,I_*+\sqrt \varepsilon J)$ under the original map. It
is not too difficult to see that  on a bounded domain
the scaled map is $O(n\sqrt{\varepsilon})$-close to the identity. 
The interpolating vector field of order one  is explicitly represented  by the formula above as
$X_1=(\bar J-J,\bar\varphi-\varphi)$. Expanding $X_1$ into Taylor series
in powers of $\sqrt{\varepsilon}$ we see that the leading term is of the order of
$\sqrt{\varepsilon}$ and, in agreement with the general theory of Lemma~\ref{mjet1}, it is Hamiltonian
with the Hamiltonian function  
\[
\hat H_1(J,\varphi)=\sqrt{\varepsilon}n \bigl(
K(J)+V_*(\varphi)
\bigr)
\]
where
\begin{equation}
\label{EQ:KV}
K(J)=\frac12(h_0''(I_*) J)\cdot J\qquad\mbox{and}\qquad
V_*(\varphi)=\frac1n\sum_{k=0}^{n-1}s(I_*,\varphi+k\omega_*).
\end{equation}
The function $K(J)$ comes from the quadratic part of the Taylor expansion of $h_0$
around $I=I_*$ while the linear part of the expansion vanishes due to the equality $h_0'(I_*)=\omega_*$. We see that the potential $V_*$ coincides with the average of the generating function $s$ over the unperturbed periodic orbit on the resonant torus.
The strong convexity of $h_0$ provides a  lower bound for $K$ so we have that
\[
\frac\nu2|J|^2\le K(J)\le  \frac{\nu_2}{2}|J|^2= \frac d2 \| h_0''\| \,|J|^2.
\]
The function $E(J,\varphi)=\frac1{n\sqrt{\varepsilon}}\hat H_1(J,\varphi)$
defines a slow variable in a neighbourhood of the resonance, it is constant along orbits of the flow of $\hat H_1$  and it changes slowly 
under  iterates of the map \eqref{Eq:fepsn_scaled}.
If $E_0>\max V_*$, then the set $E\le E_0$ contains a ball $|J|\le \hat r_0$
provided $\tfrac12 \nu_2 \hat r_0^2\le E_0-\max V_*$. Let $E_1>E_0$. The set $E\le E_1$ is contained in the ball $|J|\le r_1$ provided $\tfrac12\nu r_1^2\ge E_1-\min V_*$. Consequently, if 
the initial point satisfies $|J_0|\le \hat r_0$ then its energy $E(J_0,\varphi_0)\le E_0$
and if some iterate satisfies $|J_k|>r_1$ then its energy $E(J_k,\varphi_k)\ge E_1$.

Since $|V_*|\le\|s\|$ we can choose 
$E_0=2\|s\|$,  $E_1=4\|s\|$, $\hat r_0^2=2\nu_2^{-1}\|s\|$ and $r_1^2=10\nu^{-1}\|s\|$. 
Then  we can conclude that a trajectory with initial condition satisfying
$|J_0|\le r_0$ remains in the ball $|J|\le r_1$ while the changes in $E$ do not exceed $2\|s\|$.

In order to achieve exponential estimates for the stability times we need to consider the optimal approximation for the map by an autonomous Hamiltonian flow instead of the leading order approximation discussed above.
For this purpose 
we can 
use Theorem~\ref{Thm:alaNeishtadt} to get an embedding into a Hamiltonian flow
with exponentially small error. We can repeat the arguments of
Section~\ref{Se:intrepol_at_Torus} replacing in the definition of the domain
$\mathcal D(I_*)$ the radius $\rho_n$ by $\hat\rho_n=R_*\sqrt{\varepsilon}$. 
We choose a constant $R_*>r_1$ and $\gamma>R_*$. Then 
 $\hat\rho_n\le \rho_n$ and consequently we already know that the scaled map is analytic and the a-priori bounds \eqref{n_it_estimates} remain valid.
Using Cauchy estimates for the derivatives of the function $s$ we get
from~\eqref{Eq:fepsn_scaled}
\[
|\bar J-J|\le \frac{2\|s\|n\sqrt\varepsilon }{r}\quad\mbox{and}\quad
|\bar\varphi-\varphi|\le 2\|h_0''\| R_* n\sqrt \varepsilon + \frac{2\|s\|n\varepsilon }{\sigma}.
\]
Taking $R_*^2=11\nu^{-1}\|s\|$ and using that $\varepsilon\le 1$ we get
\[
|\bar J-J|\le C_0 n\sqrt\varepsilon\quad\mbox{and}\quad
|\bar\varphi-\varphi|\le C_0 n\sqrt\varepsilon
\]
where $C_0$ can be easily expressed in terms of $\|s\|$, $r$, $\sigma$, $\nu$ and $\|h_0''\|$.

Then using arguments similar to the previous section we arrive to the following theorem.

\begin{thm}\label{Thm:nucleus1}
 Under the assumptions of Theorem~\ref{Thm:longstab}, 
 there are constants $c_4,c_5$ independent of the resonance
 such that if $|I_0-I_*|^2\le 2\nu_2^{-1}\|s\|\varepsilon$
 then
 \[
|I_k-I_*|^2\le 11\nu^{-1}\|s\|\varepsilon \quad\mbox{for}\quad
0\le k \le \hat T_\varepsilon=c_4\exp(c_5/\sqrt{n^2\varepsilon}).
 \]
\end{thm}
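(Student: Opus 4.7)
The plan is to repeat the scheme of Section~\ref{Sect:Nekhoroshev} but with the finer scaling $I=I_*+\sqrt{\varepsilon}J$ already set up in the text above the statement, so that the "pendulum-like" Hamiltonian $\hat H_1=\sqrt{\varepsilon}n\bigl(K(J)+V_*(\varphi)\bigr)$ emerges as the leading interpolating flow. Concretely, I would work on the complex domain
\[
\hat{\mathcal D}(I_*)=\bigl\{(I,\varphi)\in\mathbb C^{2d}:\ |I-I_*|<2R_*\sqrt{\varepsilon},\ |\operatorname{Im}\varphi|<r/2\bigr\},
\]
noting that $\hat\rho_n=R_*\sqrt\varepsilon\le \rho_n$ for $n<N_\varepsilon$ (upon choosing $\gamma\ge R_*$), so that the a priori Lemma~\ref{Le:apriori} and the constructions of Section~\ref{Se:intrepol_at_Torus} remain valid. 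The rescaled map on $\hat D_0=B(0,1)\times\mathbb R^d$ is then $\hat\epsilon_n$-close to the identity with $\hat\epsilon_n\le C_0 n\sqrt{\varepsilon}$ by the bound displayed just before the statement.

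Next, I would invoke Theorem~\ref{Thm:alaNeishtadt} on the complex $\delta$-neighbourhood of $\hat D_0$ with $\delta=\tfrac12\min\{1,r\}$ independent of $\varepsilon$ and $n$. Provided $\varepsilon$ is small enough that $\hat\epsilon_n<\delta/(6\mathrm e(d+1))$, this produces an optimal order Hamiltonian vector field $\hat X_m=\mathrm J\nabla H_m$ with
\[
\bigl\|\Phi_{\hat X_m}-\hat f_\varepsilon^n\bigr\|_{\hat D_0}\le 5\mathrm e^{d+1}\hat\epsilon_n\exp\bigl(-c_5/\sqrt{n^2\varepsilon}\bigr),
\]
for a constant $c_5>0$ absorbing $\delta/(6\mathrm e C_0)$. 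The exponent $1/\sqrt{n^2\varepsilon}$ is automatic: it is nothing but $\delta/(6\mathrm e\hat\epsilon_n)$ with $\hat\epsilon_n\sim n\sqrt{\varepsilon}$.

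Third, I would identify $H_m$ with its leading order. Exactly as in Section~\ref{Se:stability}, the generating function of $\hat f_\varepsilon^n$ splits into an integrable part plus a perturbation $w_n$; in the $\sqrt{\varepsilon}$-scaling the integrable part is precisely $n\sqrt{\varepsilon}\,K(J)+O(\varepsilon)$, and averaging the angle-dependent sum over the periodic orbit produces $n\sqrt{\varepsilon}V_*(\varphi)$ up to an error controlled by the difference between iterates of $F_\varepsilon$ and of $F_0$, which is $O(n^2\varepsilon)$. Combining this with \eqref{Eq:XmLeadingOrder} gives
\[
\left\|\frac{H_m}{n\sqrt{\varepsilon}}-\bigl(K+V_*\bigr)\right\|_{\hat D_0}\le C_6\sqrt{\varepsilon}.
\]
The convexity bound $\tfrac{\nu}{2}|J|^2\le K(J)\le\tfrac{\nu_2}{2}|J|^2$ together with $|V_*|\le\|s\|$ then implies (for $\varepsilon$ small) that the energy levels $E=H_m/(n\sqrt{\varepsilon})\le E_0=2\|s\|$ contain the ball $|J|\le\hat r_0$ with $\hat r_0^2=2\nu_2^{-1}\|s\|$ and are contained in $|J|\le r_1<R_*$ with $r_1^2=10\nu^{-1}\|s\|$.

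Finally, a telescoping sum identical to the one at the end of Section~\ref{Se:stability} closes the argument: the change in $E$ per iterate of $\hat f_\varepsilon^n$ is bounded by $\|\hat X_m\|_{\hat D_1}\cdot \|\hat f_\varepsilon^n-\Phi_{\hat X_m}\|_{\hat D_0}/(n\sqrt{\varepsilon})$, which is exponentially small in $1/\sqrt{n^2\varepsilon}$, so escape from the nucleus requires at least $c_4\exp(c_5/\sqrt{n^2\varepsilon})$ iterates of $F_\varepsilon^n$, and Lemma~\ref{Le:apriori} bridges the intermediate iterates. The main obstacle will be the uniformity of $c_4,c_5$ in the resonance: all the constants appearing in $C_0,C_6,c_5$ must be shown to depend only on $\nu,\nu_2,\|h_0''\|,\|s\|,r,\sigma,d$ and not on the particular $(I_*,n)$. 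This is essentially a careful bookkeeping exercise following the bounds already derived above, and exploits the key feature that the rescaling by $\sqrt{\varepsilon}$ trades the $\varepsilon^{1/2(d+1)}$ Nekhoroshev exponent for the sharper $1/\sqrt{n^2\varepsilon}$ exponent appropriate to a small neighbourhood of a single resonance.
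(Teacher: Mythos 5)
Your plan reproduces the paper's own argument: rescale by $\sqrt{\varepsilon}$ rather than by $\rho_n$, work on the domain of radius $\hat\rho_n = R_*\sqrt{\varepsilon}\le\rho_n$ (so all the bounds of Section~\ref{Se:intrepol_at_Torus} persist), obtain closeness to the identity $\hat\epsilon_n\le C_0 n\sqrt{\varepsilon}$, feed this into Theorem~\ref{Thm:alaNeishtadt} to get the embedding error $\exp(-c_5/\sqrt{n^2\varepsilon})$, identify the leading interpolating Hamiltonian with $n\sqrt{\varepsilon}(K+V_*)$, and close with the telescoping energy argument using $E_0=2\|s\|$, $E_1=4\|s\|$, $\hat r_0^2=2\nu_2^{-1}\|s\|$, $r_1^2=10\nu^{-1}\|s\|$. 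This is precisely the route the paper takes.

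One small inaccuracy: the error in the approximation $H_m/(n\sqrt{\varepsilon})\approx K+V_*$ is not $O(\sqrt{\varepsilon})$ as you wrote but $O(n\sqrt{\varepsilon})$. Indeed, the correction coming from replacing $(I_{k+1},\varphi_k)$ by $(I_*,\varphi_0+k\omega_*)$ in the sums is $O(n^2\varepsilon)$, and the mismatch between $H_m$ and the generating function $S_n$ coming from \eqref{Eq:XmLeadingOrder} is $O(\hat\epsilon_n^2/\delta)=O(n^2\varepsilon)$; dividing by $n\sqrt{\varepsilon}$ gives $O(n\sqrt{\varepsilon})$, not $O(\sqrt{\varepsilon})$. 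This does not damage the argument because $n<N_\varepsilon=\varepsilon^{-d/2(d+1)}$ forces $n\sqrt{\varepsilon}<\varepsilon^{1/2(d+1)}\to0$ uniformly over the admissible resonances, so the error is still swallowed by the slack $E_1-E_0=2\|s\|$ after decreasing $\varepsilon_0$; but the dependence on $n$ should be stated correctly since uniformity in $(I_*,n)$ is the part of the theorem you rightly flagged as the delicate point.
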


It should be noted that the proof of this theorem is  a refinement
of the proof of Theorem~\ref{Thm:longstab}. Both theorems cover the same set 
of fully resonant tori and 
for each one Theorem~\ref{Thm:nucleus1} provides a nucleus, a smaller stability zone 
with longer stability times. For a fixed $n$ the difference becomes more prominent as $\varepsilon$ decreases.
 The estimate suggests that Arnold diffusion slows
down substantially in a neighbourhood of resonances of maximal multiplicity
provided the period $n$ is not too high for a given $\varepsilon$.

In Theorem~\ref{Thm:nucleus1} the constant $c_5$ is chosen to be the same for 
all resonances. 
It should be noted that for some resonances the bounds
for the stability times can be substantially improved (note that doubling $c_5$
is equivalent to squaring a very large number $\hat T_\varepsilon$).
Indeed, at the centre of our proofs are the upper bounds
for the sums in the right-hand side of the equation \eqref{Eq:fepsn_scaled}
which are used to control the distance of the map from the identity.
These sums can be interpreted as average values of functions taken over a finite segment of a trajectory of the map and we used elementary but not always
optimal bounds. 
For example, we used
 that $|V_*|\le\|s\|$, which does not take into account 
 that the
average value of a periodic function can be much smaller. 
A sharper bound can be
obtained if we take into account properties of the frequency vector $\omega_*$.
We notice that the function
$V_*$ inherits periodicity in $\varphi$ from the function $s$ and in addition $n\omega_*\in\Z^d$ implies that for all $\varphi$
\[
V_*(\varphi+\omega_*)=V_*(\varphi).
\]
It follows easily that all non-resonant Fourier coefficients of $V_*$ must vanish,
i.e., if $j\cdot\omega_*\notin \Z$ for some $j\in\Z^d$ then the Fourier expansion of $V_*$
does not have a term proportional to $\exp(2\pi i\, j\cdot \varphi)$.
In terms of Fourier expansions we can write
\[
V_*(\varphi)=\sum_{j\cdot\omega_*\in \Z} s_j(I_*)\mathrm e^{2\pi i \,j\cdot\varphi}.
\]
Since $s$ is an analytic function of $\varphi$, its Fourier coefficients $s_j(I_*)$
decay exponentially fast when $|j|_1$ grows. Therefore the amplitude of $V_*$
can be substantially smaller than $\|s\|$.

This observation suggests that in the absence of low order resonances
the stability times should be much larger than the general lower bound $\hat T_\varepsilon$.
This situation can arise either due to the properties of the frequency vector $\omega_*$
or due to the absence of the resonant terms in the Fourier expansion of the generating function. 
In particular, we expect that in the latter case
 the lower bound for the stability time scales as $\exp(c\varepsilon^{-\alpha}/n)$ with  $\alpha>\frac12$ (a phenomenon similar to  \cite{S1994}).
On the other hand, in the case when a full spectrum condition is satisfied
the lower bound for the stability time scales as $\exp(c_5\varepsilon^{-1/2}/n)$
with a constant $c_5\sim \mathrm e^{\pi r j_0}$, i.e. the constant becomes very large for
larger values of $j_0$, the order of the lowest order resonance of $\omega_*$.

\section{Final comments and conclusions \label{Se:conclusion}}

Our proof of the Nekhoroshev estimates  is based on
discrete averaging. The weighted averages of iterates of the near-integrable map are
explicitly computed to produce the interpolating vector field $X_m$.
This vector field is not necessarily Hamiltonian but 
 Theorem~\ref{Thm:alaNeishtadt} states the existence of a Hamiltonian vector field 
$\hat{X}_m = J \nabla H_m$ very close to $X_m$.
In a neighbourhood of a fully resonant torus
the time-one maps of $X_m$ and $\hat X_m$ are both exponentially close to 
$f^{n}_\varepsilon$ for $n< N_\varepsilon$ and, consequently,
 the map preserves $H_m$
up to an exponentially small error.
The convexity arguments 
are used to show that level lines of $H_m$ present obstacles for the drift of action
variables.

The analytical tools developed in this paper rely on explicit constructions
and provide a useful tool for  analytical and numerical exploration of
long term dynamics.

{\bf \em Computing a slow variable from iterates of the map in original variables.}
The value of $H_m$ is a natural slowly moving observable which provides
a useful instrument for studying long time stability and Arnold diffusion. Our
method provides an explicit expression for this slow variable in terms of
weighted averages of the iterates of the map, hence avoiding transformations of
coordinates traditionally used to reduce the system to a normal form. 
In particular,  we may construct the interpolating vector field $X_m$
using the iterates of $f_\varepsilon^n$ in the original coordinates
$(I,\varphi)$, see Remark~\eqref{Rk_scaling}.

{\bf \em Formal embedding of a near-the-identity map into an autonomous flow.}
Our method provides a new algorithm 
for constructing the formal embedding of a
near-the-identity map into an autonomous flow. For example, 
let
$f_\mu:(p,q)\mapsto(p_1,q_1)$ be defined with the help of a generating
function
\[
G_\mu(p_1, q) = p_1 q + \mu S(p_1, q).
\]
We can get an explicit expression for $\hat X_{m,\mu}$ for any $m$
by differentiating $m$ times with respect to $\mu$ the system
\[
\left\{
\begin{aligned}
p_{k} &= p_{k-1} - \mu \frac{\partial{S}}{\partial q}(p_k,q_{k-1}),\\
q_k &= q_{k-1} + \mu \frac{\partial{S}}{\partial p}(p_k,q_{k-1})
\end{aligned}
\right.
\]
for $k=1,\dots,m$, and evaluating at $\mu=0$.
The derivatives depend in a polynomial way on
partial derivatives of $S$ and can be computed explicitly. 
Then the Hamiltonian $H_m$ can be restored from the vector field. 
For example, the second order interpolating Hamiltonian for $f_\mu$ is
\[
H_{2,\mu}= \mu S
-\mu^2 \, \frac12 \, \frac{\partial  S}{\partial p} \cdot
\frac{\partial  S}{\partial q}
\]
where all functions are evaluated at a point $(p,q)$.   
This argument can also be  applied to an individual map with a generating function
$G(p_1, q) = p_1 q +  S(p_1, q)$. 
This map is approximated by the time one map of the flow defined by
\[
H_{2}= S
-\frac12 \, \frac{\partial  S}{\partial p} \cdot
\frac{\partial  S}{\partial q}
\]
with the error cubic in $\epsilon=\|\nabla S\|$ and explicitly computable
constants according to Remark~\ref{remark_cota}.

{\bf \em Numerical evaluation of $H_m$.}
In numerical computations it is usually not convenient to rely on algebraic manipulations and instead one can evaluate $H_m(x)$ from integrals of $X_m$
along 
continuous paths connecting a base point $p$ and the point $x$, see
\cite{GelfreichV18}. Note that this procedure typically produces a Hamiltonian
which is not periodic in the angle variables even when $X_m$ is periodic.
The periodicity of the Hamiltonian
can be restored by adding a small correction.

{\bf \em The choice of the interpolation scheme.}
In this paper we have used the Newton interpolation scheme to obtain $X_m$. This
 scheme uses the forward orbit $x_0,\ldots,x_m$ for the construction of
$X_m$ and simplifies some analytical expressions involved in the proof. But any
other interpolation scheme will lead to  similar results. 

From the numerical point of view,  higher accuracy of
interpolation is expected when interpolation nodes are located symmetrically
around $x_0$.
This can be useful for numerical studies of concrete examples when
relatively large values of $\varepsilon$ are to be used in order to observe
Arnold diffusion on a time scale accessible to the computer.

For example  we can use the Gauss forward formula
\[
P_m(t;x_0)=x_0+\Delta_1(x_0)t+\frac{\Delta_2(x_{-1})}{2!}t(t-1)
+\frac{\Delta_3(x_{-1})}{3!}(t+1)t(t-1)
+\dots
\]
If $m=2j$ is even, then the interpolation is based on a symmetrical
piece of the orbit, $x_{-j},\dots,x_0,\dots,x_j$.
Differentiating with respect to $t$ at $t=0$ we obtain the interpolating vector field
\begin{equation} 
\label{Eq:interpol_VF_symmetric}
X_m(x_0)=\sum_{k=1}^{j}  (-1)^{k-1}\left(\frac{(k-1)!^2\Delta_{2k-1}(x_{-k+1})}{(2k-1)!}
-\frac{(k-1)!k!\Delta_{2k}(x_{-k})}{(2k)!} \right)
\end{equation}
instead of \eqref{Eq:interpol_VF}.
This interpolating vector field can be used
in Theorem~\ref{Thm:alaNeishtadt} provided two steps in the proof
 are modified.
First, the constant in the bound \eqref{norD1opt2} depends on the coefficients of $P_m$.
Using \eqref{Eq:interpol_VF_symmetric} and the bound \eqref{bound_D_k} for the
finite differences it is easy to check that for the Gaussian symmetric scheme
the same upper bound holds. 
Second, given a number $m$ of iterates of the original map,
the Gauss symmetric scheme allows us to double the value of $\mu_m$ in the proof of
Theorem~\ref{Thm:alaNeishtadt}. This leads to better accuracy of the embedding of the map into a
flow with the error being of the order $\sim
\exp(-\delta/3e\varepsilon)$, i.e. we get the error term approximately squared.

{\bf \em Nucleus of resonances.} 
The fact that the methodology to obtain the estimates on the long term dynamics
of the map does not depend on changes of coordinates leads to a description of
the leading order  dynamics near the nucleus of the resonances, that is, in a ball of radius $\mathcal{O}(\sqrt{\varepsilon})$ near a resonant torus $I=I_*$. 
The corresponding energy preservation leads to much larger stability times for initial conditions
in the nucleus of the resonances. The construction is explicit. In particular,
the potential part of the Hamiltonian is given by the average of the generating
function $s(I,\varphi) $ of $f_\varepsilon$ along the unperturbed periodic orbit
corresponding to the resonant torus $I=I_*$.

\medskip

Finally we note that we have used the convexity assumption for
the generating function of the unperturbed map. 
At the present time it is not clear up to which extent the convexity assumption can be relaxed in our proof.
Nevertheless we expect that our method can be useful for studying systems without
the convexity assumption. In this case our method produces 
a very slow variable near a resonance but the corresponding level
lines  are not necessarily an obstacle for the movements of actions.
Nevertheless, the slow variables may provide useful information on possible directions of Arnold diffusion. We also hope that our method can be applied to study dynamics of near integrable systems without references to action-angle variables for the integrable part, opening potential applications to study dynamics of maps in neighbourhoods of totally elliptic fixed points.

\appendix

\section{Implicit function theorem}

In the proof we switch between a symplectic map
and the corresponding generating function.
This transition relies on the following version
of the implicit function theorem.

\begin{thm}[Implicit Function Theorem]\label{Thm:IFT}
Let $A,B\subset \C^d$ be open sets, 
$f:A\times B\to\C^d$  an analytic function,
 $x_0\in A$ and $y_0\in B$. 
If there is $R>0$ such that $B_R(y_0)\subset B$ and
\[M=\sup_{y\in B_R(y_0)}|f(x_0,y)|<\frac{R}{d+1}
\]
then the equation
\[
y=y_0+f(x_0,y)
\]
has a unique solution $y\in B_R(y_0)$. Moreover, this solution 
depends analytically on~$x_0,y_0$.
\end{thm}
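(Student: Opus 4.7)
The plan is to recast the equation as a fixed point problem and apply the Banach contraction principle on a carefully chosen ball. Define the map $T: B_R(y_0) \to \mathbb{C}^d$ by
\[
T(y) = y_0 + f(x_0, y).
\]
A solution of the stated equation is precisely a fixed point of $T$. Observe first that if $y\in B_R(y_0)$ is any such fixed point, then
\[
|y - y_0| = |f(x_0,y)| \le M,
\]
so every solution in $B_R(y_0)$ lies automatically in the closed ball $\overline{B_M(y_0)}$. This reduces both existence and uniqueness to an analysis of $T$ on $\overline{B_M(y_0)}$, which is contained in $B_R(y_0)$ because $M < R/(d+1) < R$.

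Next I would show that $T$ is a contraction on $\overline{B_M(y_0)}$. For any $z\in\overline{B_M(y_0)}$, the polydisc of radius $R-M$ about $z$ lies inside $B_R(y_0)$, so the multivariate Cauchy inequality applied componentwise to $f(x_0,\cdot)$ (which is analytic and bounded by $M$ on $B_R(y_0)$) yields
\[
\left|\frac{\partial f_i}{\partial y_j}(x_0,z)\right| \le \frac{M}{R - M}, \qquad 1\le i,j\le d.
\]
Hence the operator norm of $D_y f(x_0,z)$ in the infinity norm is bounded by $dM/(R-M)$, and an integration along the segment joining any two points $y,y'\in\overline{B_M(y_0)}$ gives
\[
|T(y) - T(y')| = |f(x_0,y) - f(x_0,y')| \le \frac{dM}{R-M}\,|y - y'|.
\]
The assumption $M < R/(d+1)$ is equivalent to $(d+1)M < R$, i.e.\ $dM < R-M$, so the Lipschitz constant $dM/(R-M)$ is strictly less than one. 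Since $T$ clearly maps $\overline{B_M(y_0)}$ into itself (again by $|f|\le M$), the Banach fixed point theorem produces a unique fixed point $y_*\in\overline{B_M(y_0)}$, and combined with the reduction above this yields uniqueness in all of $B_R(y_0)$.

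For the analytic dependence on the parameters $(x_0, y_0)$, I would use the Picard iteration $y^{(n+1)}=y_0+f(x_0, y^{(n)})$ with $y^{(0)}=y_0$. Each $y^{(n)}$ is an analytic function of $(x_0, y_0)$, the estimates above are locally uniform in $(x_0, y_0)$, and the contraction property gives $|y^{(n)} - y_*| \le (dM/(R-M))^n \cdot M$ uniformly. Consequently $y_*$ is a locally uniform limit of analytic functions, hence analytic by Weierstrass's theorem. (Alternatively one can invoke the analytic implicit function theorem directly: the derivative $\partial_y(y - y_0 - f(x_0,y)) = I - D_y f$ is invertible because the norm of $D_y f$ is strictly below one.) The main subtle point is the appearance of the factor $d+1$: it arises from combining the Cauchy estimate for a single partial derivative, which costs a factor $1/(R-M)$, with the passage to the infinity operator norm, which introduces the factor $d$; the self-mapping condition contributes the additional unit, and only the precise balance $(d+1)M<R$ makes both requirements compatible.
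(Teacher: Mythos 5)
Your proof is correct and follows essentially the same route as the paper: a contraction argument on $\overline{B_M(y_0)}$ with the Lipschitz constant $dM/(R-M)$ extracted from the Cauchy estimate for the $y$-derivatives of $f(x_0,\cdot)$, and the hypothesis $M<R/(d+1)$ used precisely to make that constant less than one. You add two small clarifications the paper leaves implicit — the observation that any fixed point in $B_R(y_0)$ automatically lands in $\overline{B_M(y_0)}$, which cleanly upgrades uniqueness to the full ball, and the Picard/Weierstrass argument for analytic dependence on the parameters — but these are refinements of the same argument, not a different approach.
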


\begin{proof}
We use the contracting mapping theorem (the $\infty$-norm is used
for vectors in $\C^d$). 
The closed ball $\overline{B_{M}(y_0)}$ is invariant under the map \[g:y\mapsto y_0 +f(x_0,y).\] In order to check that $g$ is contracting
we take  $u,v\in \overline{B_{M}(y_0)}$,
then 
\[
\begin{split}
    |g_j(u)-g_j(v)|&=|f_j(x_0,u)-f_j(x_0,v)|\\&=\left|
    \int_0^1 \sum_{k=1}^d\partial_{y_k}f_j(x_0,u s+(1-s)v)(u_k-v_k)ds\right|
    \\&
    \le
    \int_0^1 \sum_{k=1}^d\left|\partial_{y_k}f_j(x_0,u s+(1-s)v)\right|\;|u_k-v_k|ds
    \\&\le
    \sum_{k=1}^d\frac{\|f_j\|_{B_R}}{R-M}|u_k-v_k|=\frac{ Md}{R-M}|u-v|
\end{split}
\]
where we used the Cauchy bound for the derivatives.
The inequality $M<\frac{R}{d+1}$ implies $\frac{Md}{R-M}<1$.
Therefore the map $g$ is contracting and it has a unique fixed point which depends analytically on the parameters.
\end{proof}

\section{Formal interpolation of a symplectic family \label{appendix:Hperiodic}}

Let $B$ denote a ball (or a simply connected domain) in $\R^d$
 and let $F_\mu$ be an analytic family of exact symplectic maps defined in $B\times \T^d$
 with $F_0=\mathrm{Id}$. The following theorem represents 
 a generally known statement (see e.g. \cite{BenettinG94}).
 Here we provide a more direct proof of the statement in the form needed for 
 the proof of our main theorem.

\begin{thm} \label{Thm:Hperiodic}
If $X_\mu$ is the formal  vector field
on $B\times\T^d$ such that
its formal time one map coincides with the Taylor expansion of $F_\mu$,
then there is a formal Hamiltonian $H_\mu$ with coefficients
defined on $B\times\T^d$ such that $X_\mu=\J\nabla H_\mu$.
\end{thm}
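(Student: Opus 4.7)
The plan is to promote the given exact symplecticity of $F_\mu=\Phi_\mu^1$ to exact symplecticity of the entire formal flow $\Phi_\mu^t$ of $X_\mu$, and then differentiate at $t=0$. Throughout, $\lambda=I\cdot d\varphi$ denotes the Liouville 1-form on $B\times\T^d$ and $\omega=d\lambda$ the symplectic form. Exact symplecticity of $F_\mu$ means $F_\mu^*\lambda-\lambda=d\mathcal S_\mu$ for a formal power series $\mathcal S_\mu$ with coefficients periodic in $\varphi$. Writing $X_\mu=\sum_{k\ge1}\mu^k X_k$ and solving $\partial_t\Phi_\mu^t=X_\mu\circ\Phi_\mu^t$ order-by-order in $\mu$, I verify inductively that each coefficient of $\Phi_\mu^t$ is polynomial in $t$; consequently $(\Phi_\mu^t)^*\tau-\tau$ is polynomial in $t$ at each order in $\mu$ for any smooth tensor $\tau$ on $B\times\T^d$. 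For every integer $n\ge0$, $\Phi_\mu^n=F_\mu^n$ is symplectic and exact symplectic, the latter with periodic primitive $\sum_{k=0}^{n-1}\mathcal S_\mu\circ F_\mu^k$ obtained by iterating $F_\mu^*\lambda-\lambda=d\mathcal S_\mu$. The polynomial $t\mapsto(\Phi_\mu^t)^*\omega-\omega$ therefore vanishes at every non-negative integer, hence identically; differentiating at $t=0$ gives $\mathcal L_{X_\mu}\omega=0$.

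For exactness I argue analogously: the de~Rham class $[(\Phi_\mu^t)^*\lambda-\lambda]\in H^1(B\times\T^d)\cong\R^d$ is polynomial in $t$ at each order in $\mu$ and vanishes at every integer, hence identically. Fixing the primitive by a normalization (say, zero mean over $\T^d$ at a base action $I_0\in B$), I obtain $\mathcal S_\mu^t$ polynomial in $t$, periodic in $\varphi$, with $\mathcal S_\mu^0=0$. Differentiating $(\Phi_\mu^t)^*\lambda-\lambda=d\mathcal S_\mu^t$ at $t=0$ and invoking Cartan's magic formula yields
\[
\mathcal L_{X_\mu}\lambda=d(\iota_{X_\mu}\lambda)+\iota_{X_\mu}\omega=dK_\mu, \qquad K_\mu:=\partial_t\mathcal S_\mu^t\bigl|_{t=0},
\]
so $\iota_{X_\mu}\omega=-dH_\mu$ with $H_\mu:=\iota_{X_\mu}\lambda-K_\mu$. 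Since $\lambda=I\cdot d\varphi$, one has $\iota_{X_\mu}\lambda=I\cdot X_\mu^\varphi$, manifestly periodic in $\varphi$, and $K_\mu$ is periodic by construction; hence $H_\mu$ is periodic. Non-degeneracy of $\omega$ converts $\iota_{X_\mu}\omega=-dH_\mu$ into the desired identity $X_\mu=\J\nabla H_\mu$.

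I expect the main obstacle to lie in the second paragraph, specifically in establishing that the cohomology class of $(\Phi_\mu^t)^*\lambda-\lambda$ is polynomial in $t$ at each order in $\mu$ (so that vanishing at every integer forces vanishing identically) and in choosing the primitive $\mathcal S_\mu^t$ jointly periodic in $\varphi$ and polynomial in $t$. An alternative, more hands-on route avoids the $t$-parameter entirely: by induction on the order of $\mu$ one writes $H_\mu=\sum\mu^k H_k$ and at each step solves a linear cohomological equation of the form $\iota_{X_k}\omega=-dH_k+(\text{known lower-order terms})$ whose solvability at the level of periodic Hamiltonians is guaranteed precisely by the order-$\mu^k$ part of the exact-symplecticity relation for $F_\mu$. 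This inductive version bypasses the flow but requires more bookkeeping with the combinatorial coefficients of the formal BCH expansion that relates $F_\mu$ to $X_\mu$.
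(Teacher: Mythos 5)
Your proof is correct and takes a genuinely different route from the paper's. The paper works with loop actions $\mathcal{I}_F(\gamma)=\int_{F(\gamma)}p\,dq-\int_\gamma p\,dq$: after deriving $\frac{d}{dt}\mathcal{I}_{\Phi^t_X}(\gamma)=\int_{\Phi^t_X\gamma(0)}^{\Phi^t_X\gamma(1)}(X_p\,dq-X_q\,dp)$, it observes that the right-hand side vanishes for all closed lifts $\gamma$ precisely when $X$ is Hamiltonian with a periodic Hamiltonian, and then extracts this order by order in $\mu$ from $\mathcal{I}_{F_\mu}(\gamma)=0$ together with the $O(\mu^{m+1})$ coincidence of $\Phi^1_{X_{m,\mu}}$ with $F_\mu$. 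You instead exploit that the coefficients of the formal flow $\Phi^t_\mu$ are polynomial in $t$ (a fact the paper also records in the proof of Lemma~\ref{mjet1}), use the formal group law to get $\Phi^n_\mu=F_\mu^n$ at every nonnegative integer $n$, and conclude that $(\Phi^t_\mu)^*\omega-\omega$ and the class $[(\Phi^t_\mu)^*\lambda-\lambda]$ are $t$-polynomials with infinitely many zeros, hence identically zero; Cartan's formula then yields the periodic Hamiltonian in one step. Your route buys conceptual economy and a cleaner treatment of periodicity, at the small cost of invoking the formal group law for $n\ge 2$, which the paper never needs; the paper's route buys an explicit self-contained integral computation in the same concrete, constant-tracking style as the rest of the article. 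Your closing worry about the second paragraph is unfounded: the cohomology-class map is linear, so it preserves $t$-polynomiality, and a polynomial valued in the finite-dimensional space $H^1(B\times\T^d)\cong\R^d$ that vanishes at all nonnegative integers vanishes identically — exactly the interpolation argument you already used for $\omega$ — and choosing the periodic primitive coefficient-by-coefficient in $t$ (each coefficient is an exact periodic $1$-form once its class vanishes) gives a $\mathcal S^t_\mu$ that is jointly polynomial in $t$ and periodic in $\varphi$.
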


\begin{proof}
We have already proved existence of the formal vector field 
 \[
 X_\mu=\sum_{k\ge 1}\mu^k X^{(k)}(p,q)
 \]
 where the coefficients are smooth functions independent of $\mu$
 and periodic in $q$. We want to show that if $F_\mu$ is symplectic
 for every $\mu$ 
 then the formal vector field is Hamiltonian, i.e. for every $k$, $X^{(k)}=(-\partial_q h^{(k)},\partial_p h^{(k)})$
 for some  function $h^{(k)}:B\times \R^d\to\R$.
 If in addition $F_\mu$ are exact symplectic, then $h^{(k)}$
 are periodic in $q$. The proof is based on analysis of loop actions.
 
 Let $\gamma:[0,t]\to\R^{2d}$ be a smooth curve
 inside the domain of the map such that $\gamma(1)-\gamma(0)\in\{0\}\times\Z^d$, i.e. 
 $\gamma$ is a lift of a loop from $\mathbb{R}^d \times \mathbb{T}^d$. 
 Let 
$$
\mathcal{I}_F(\gamma) = \int_{F(\gamma)} p \, dq - \int_\gamma p \, dq.
$$
If $F$ is symplectic, Stokes' theorem implies that 
$\mathcal{I}_F(\gamma)$ 
depends only on the homotopy class of $\gamma$ and
$\mathcal{I}_F(\gamma)=0$ for any contractile loop $\gamma$.
 If $F$ is exact symplectic map then
$\mathcal{I}_F(\gamma)=0$ for any lift $\gamma$ of a loop from $\mathbb{R}^d \times \mathbb{T}^d$. 
For a flow defined by a vector field $X=(X_p,X_q)$ we write $\gamma_t(s)=(p(t,s),q(t,s)):=\Phi_X^t\gamma(s)$. Then
\[
\begin{split}
\frac{d}{dt} \mathcal{I}_{\Phi^t_X}( \gamma) & = 
\frac{d}{dt} \int_{\Phi_X^t(\gamma)} p \, dq \, = \,
\frac{d}{dt} \int_0^1  p(t,s) \, \frac{\partial q}{\partial s} (t,s) ds  \\
& = \int_0^1 \left(  \frac{\partial p}{\partial t}(t,s) \, \frac{\partial q}{\partial s} (t,s) + p(t,s) \, \frac{\partial^2 q}{\partial s \partial t}(t,s)  \right) ds  \\ 
& =  \int_0^1  \left(  \frac{\partial p}{\partial t}(t,s) \frac{\partial q}{\partial s} (t,s)  - \frac{\partial p}{\partial s}(t,s) \, \frac{\partial q}{\partial t} (t,s)  \right) ds 
\\
&=
\int_{\Phi_X^t\gamma(0)}^{\Phi_X^t\gamma(1)}
\left(X_pdq-X_qdp\right).
\end{split}
\]
The right hand sides vanishes for every $\gamma$ with $\gamma(0)=\gamma(1)$
iff $X$ is Hamiltonian, i.e $X_p=-\partial_q H$ and $X_q=\partial_p H$
for some function $H:B\times \R^d\to \R$. In this case the integral can be
evaluated explicitly for any lift $\gamma$,
\[
\begin{split}
\frac{d}{dt} \mathcal{I}_{\Phi^t_X}( \gamma) =
H(\Phi^t_X(\gamma(0)))
-
H(\Phi^t_X(\gamma(1)))
=
H(\gamma(0))
-
H(\gamma(1)).
\end{split}
\]
Since $ \mathcal{I}_{\Phi^0_X}( \gamma)=0$ we get
\[
{\mathcal I}_{\Phi^1_X}(\gamma)=H(\gamma(0))
-
H(\gamma(1)).
\]
Recall that the coefficients of the formal vector field are defined form the following requirement: for every $m\in\N$, a partial sum of the formal series 
\[
X_{m,\mu}(p,q)=\sum_{k=1}^m \mu^kX^{(k)}(p,q)
\]
defines the flow $\Phi^1_{X_{m,\mu}}=F_\mu + O(\mu^{m+1})$.
If $F_\mu$ are symplectic, then $\mathcal{I}_{F_\mu }( \gamma)=0$
for all contractible loops $\gamma$ and, consequently, for $k\le m$
\[
\oint_{\gamma}
\left(X^{(k)}_pdq-X^{(k)}_qdp\right)=0.
\]
Therefore $X_\mu$ is Hamiltonian.

If $F_\mu$ are exact symplectic, then ${\mathcal I}_{\Phi^1_{X_{m,\mu}}}=
\mathcal{I}_{F_\mu }( \gamma)+O(\mu^{m+1})=O(\mu^{m+1})$
for all lifts  $\gamma$. Since the Hamiltonian of $X_{m,\mu}$
is polynomial of degree $m$ in $\mu$ we conclude that all coefficients
$h^{(k)}$ are periodic in $q$.
\end{proof}

\section{Generating functions of exact symplectic maps \label{Se:genfun}}

In this paper we need to find a generating function
for a near-the-identity exact symplectic map defined on a subset of $\mathbb R^d\times\mathbb T^d$.

We recall that a map is called symplectic if it preserves
the standard symplectic form 
\[
\omega=\sum_{l=1}^d dp_l\wedge dq_l.
\]
The map is exact if it preserves the loop action
\[
A(\gamma)=\oint_\gamma \sum_{l=1}^d p_l\, dq_l
\]
for all loops inside its domain. A symplectic map
automatically preserves loop actions for contractible loops.

Suppose that $f:(p,q)\mapsto (\bar p,\bar q)$ 
is a lift of a symplectic
map. We assume that the map is analytic in a 
neighbourhood of $B_\rho\times\mathbb R^d$.
Suppose that 
we can rewrite $f$ in the cross form
\begin{equation}\label{Eq:crossform}
\begin{aligned}
 p  & =\bar  p + u(\bar p,q),\\
 \bar q&=q+v(\bar p,q),
\end{aligned}
\end{equation}
where the functions $u=(u_1,\ldots,u_d)$ and $v=(v_1,\ldots,v_d)$ are periodic in $q$.
Since the map is symplectic we get that $\sum_{l=1}^dd\bar p_l\wedge d\bar q_l=\sum_{l=1}^dd p_l\wedge d q_l$ and we get
\[
\begin{split}
 \sum_{l=1}^d d(u_l dq_l + v_ld\bar p_l)&= 
  \sum_{l=1}^d (du_l\wedge dq_l + dv_l\wedge d \bar p_l)
 \\& =\sum_{l=1}^d ((d p_l-d\bar p_l)\wedge dq_l +
 (d\bar q_l-dq_l)\wedge d\bar p_l)\\
 &=\sum_{l=1}^d d p_l\wedge d q_l -
 \sum_{l=1}^d \bar d \bar p_l \wedge dq_l=0.
\end{split}
\]
Then we choose a base point $(\bar p_0,q_0)$ and define
\begin{equation}\label{Eq:genfunc}
s(\bar p,q)=\int_{(\bar p_0,q_0)}^{(\bar p,q)}\sum_{l=1}^d (u_l dq_l + v_ld\bar p_l).
\end{equation}
The previous argument implies that for a symplectic $f$ the value of the integral is independent of the path
connecting the end points as the domain is simply connected.
Differentiating the integral we see that
\[
u_l=\frac{\partial s}{\partial q_l},\qquad v_l=\frac{\partial s}{\partial \bar p_l}.
\]
Consequently  the map $f$ can be defined with the help of the generating function
$\bar p\cdot q+s(\bar p,q)$. 
Let $e_l$ denote a vector of the canonical basis in $\R^d$.
Then
\[
s(\bar p,q+e_l)-s(\bar p,q)=
\int_{(\bar p_0,q_0)}^{(\bar p_0,q_0+e_l)}\sum_{l=1}^d (u_l dq_l + v_ld\bar p_l).
\]
Now suppose that $f$ is homotopic to the identity and
consider a smooth curve  $\gamma_l=(p(t), q(t))$ 
such that $q(1)=q(0)+e_l$. Let $\bar \gamma_l=(\bar p(t), \bar q(t))$
be the image of this curve. 
Since the map is homotopic to the identity we have $\bar q(1)=\bar q(0)+e_l$.
We compute the difference of the loop actions:
\[
\begin{split}
  A&(\bar\gamma_l)-A(\gamma _l) =
  \int_0^1 \sum_{l=1}^d \left( \bar p_l(t)\, d\bar q_l(t)
 -  p_l(t)\, dq_l(t)\right)
  \\
  &= \int_0^1 \sum_{l=1}^d \left( 
  \bar p_l(t)\, dv_l(\bar p(t),q(t))
 -u_l(\bar p(t),q(t))\, dq_l(t)\right)
 \\
  &= \int_0^1 \sum_{l=1}^d \bigl( 
- v_l(\bar p(t),q(t)) d\bar p_l(t)
 -u_l(\bar p(t),q(t))\, dq_l(t)\bigr)
  \\
  &= -\int_{(\bar p(0),q(0))}^{(\bar p(0),q(0)+e_l)} \sum_{l=1}^d \bigl( 
v_l d\bar p_l
 +u_l dq_l\bigr)=-s(\bar p(0),q(0)+e_l)+s(\bar p(0),q(0)).
\end{split}
\]
We see that the conservation of loop actions is equivalent
to the periodicity of $s$.

\section*{Acknowledgements}
A.V. is supported by the Spanish grant PID2021-125535NB-I00 funded by 
MICIU/AEI/10.13039/501100011033 and by ERDF/EU.
He also acknowledges
the Catalan grant 2021-SGR-01072 and the
Severo Ochoa and Mar\'{\i}a de Maeztu Program for Centers and Units of
Excellence in R\&D (CEX2020-0010 84-M).

\addcontentsline{toc}{section}{References}
\bibliographystyle{plain}
\bibliography{NearInteg_Nekhoroshev}

\end{document}